\documentclass[preprint,11pt]{elsarticle}

\usepackage{amssymb,amsmath,amsfonts}
\usepackage[english]{babel}
\usepackage[letterpaper,top=2cm,bottom=2cm,left=3cm,right=3cm,marginparwidth=1.75cm]{geometry}
\usepackage{float}
\usepackage{graphicx}
\usepackage[colorlinks=true]{hyperref}
\usepackage{soul} 
\usepackage{tikz}

\newcommand{\io}{[0,1]}

\newcommand{\R}{\mathbb{R}}

\newcommand{\pt}{\text{ }\forall\text{ }}

\newcommand{\N}{\mathbb{N}}

\newtheorem{theorem}{Theorem}[section]
\newtheorem{definition}[theorem]{Definition}
\newtheorem{corollary}[theorem]{Corollary}
\newtheorem{proposition}[theorem]{Proposition}
\newtheorem{lemma}[theorem]{Lemma}
\newtheorem{example}[theorem]{Example}

\newenvironment{proof}[1][Proof]{\noindent{#1.} }{\ \rule{0.5em}{0.5em}}

\begin{document}
	
\begin{frontmatter}
\title{Sequential ordering relations {with application to} fuzzy numbers}
\author[label1]{Diego Garc\'ia-Zamora\corref{mycorrespondingauthor}}\ead{dgzamora@ujaen.es}
\author[label2]{Antonio Francisco Rold\'an L\'opez de Hierro}\ead{aroldan@ugr.es}
\cortext[mycorrespondingauthor]{Corresponding author}
\address[label1]{Department of Mathematics, Universidad de Ja\'en, Ja\'en, Spain.}
\address[label2]{Department of Statistics and Operations Research,  University of Granada, Granada, Spain.}

\begin{abstract}
The ranking of fuzzy numbers has become a challenging task in fuzzy set theory due to their complex, multi-dimensional nature. While the Klir-Yuan partial order provides a natural term-wise comparison of $\alpha$-cuts, it often leaves many fuzzy numbers incomparable. To address this, various ranking methods have been developed to construct total preorders between them. However, many classical approaches suffer from significant information loss as they imply a defuzzification process. On the other hand, approaches such as admissible orders allow defining total orders, but at the expense of imposing strict algebraic rules that may contradict human intuition. In this study, we introduce a generalized sequential ordering framework to overcome these limitations. By establishing a sequence space over a totally preordered base space, we construct a flexible lexicographical structure that sequentially resolves ties. We prove that this framework yields total preorders and, under injectivity conditions, total orders. Furthermore, we analyze the compatibility of these sequential orders with the notion of admissibility. We also show that our proposed framework provides a unified mathematical umbrella that encompasses and generalizes existing ranking techniques, offering highly discriminative ordering relations for fuzzy numbers and beyond.  
\end{abstract}

\begin{keyword}
Fuzzy numbers \sep Admissible orders \sep Sequential ordering relations \sep Total preorders \sep Lexicographical order
\end{keyword}

\end{frontmatter}

\section{Introduction}
\label{sec:intro}

Since the inception of fuzzy set theory by Zadeh \cite{zadeh1965}, modeling uncertainty has become a cornerstone in fields such as decision-making \cite{bellman1970}, approximate reasoning \cite{zadeh1975}, and expert systems \cite{zimmermann2012}. Fuzzy numbers, which generalize real numbers by considering imprecise boundaries, are particularly well-suited for capturing subjective or vague information \cite{DoCMF}. However, this flexibility introduces a fundamental challenge: unlike the crisp real numbers $\mathbb{R}$, which possess a natural total order, the set of fuzzy numbers does not inherently have a universally accepted linear ordering \cite{dubois1980, wang2001}.

The problem of ordering fuzzy numbers has attracted significant research interest over the decades, especially given its profound implications in fuzzy decision-making  \cite{carlsson1996fuzzy, kahraman2008fuzzy}. Early attempts to tackle this problem often relied on mapping fuzzy numbers to a single crisp representative or score. Prominent examples include the centroid or center-of-gravity method \cite{yager1978, Centroids} and other magnitude-based approaches  \cite{ABBASBANDY2009413}. While intuitively appealing and computationally straightforward, these defuzzification-based methods carry an essential limitation: they generally yield preorders rather than orders \cite{wang2001, Centroids}. That is, they lack the property of antisymmetry, i.e.,  distinct fuzzy numbers can map to the identical score, leading to ties and leaving them indistinguishable in ranking contexts.

To overcome this lack of discriminative power while preserving the natural geometric ordering of fuzzy numbers, researchers have turned to the concept of {admissible orders}. Building upon earlier work regarding linear orders on intervals \cite{AdmissibleInterval}, Zumelzu et al. \cite{zumelzu2022} introduced the notion of an admissible order on fuzzy numbers. An admissible order is a total order that refines the standard partial order introduced by Klir-Yuan \cite{Klir1994}. Admissible orders ensure that if one fuzzy number is strictly larger than another in the natural partial order sense, the admissible order will preserve this relationship.

Recently, the study of admissibility has seen rapid growth, with investigations into admissible Ordered Weighted Averaging operators \cite{AOWA} and the admissibility of centroid-based preorder \cite{Centroid_Admis} and the $\alpha$-order \cite{Alphaorder_Admis}. The $\alpha$-order, in particular, relies on evaluating fuzzy numbers at specific membership levels (or $\alpha$-cuts) to construct lexicographical hierarchies \cite{Neres2023}. This approach provides a technical method to generate total preorders by considering specific sequences of evaluation levels rather than just a single collapsed representative.

Motivated by these recent advancements and recognizing the utility of constructing robust ranking relations, this paper introduces a general framework for {sequential ordering relations} on fuzzy numbers. By extending the notions introduced in \cite{Neres2023} and leveraging structural properties of fuzzy number spaces, we propose a class of orderings that can be highly discriminative and interpretable at the same time. 

The remainder of this paper is organized as follows. Section \ref{sec:prelim} recalls the fundamental concepts regarding ordering relations and fuzzy numbers. Section \ref{sec:sequential} introduces the concept of sequential ordering relation and its main properties. In Section \ref{sec:alpha-order}, we prove that our proposal extends the notion of $\alpha$-order, and Section \ref{sec:unifying} shows that sequential orderings generalize many other classical ranking methods for fuzzy numbers, such as admissible orders and centroids. In Section \ref{sec:new}, we apply the notion of sequential ordering to obtain new ranking methods for different uncertainty models, such as finite fuzzy numbers and interval type-2 fuzzy numbers (IT2FN). Finally, Section \ref{sec:conclusion} presents some concluding remarks and directions for future work.

\section{Preliminaries}\label{sec:prelim}

Here, we present the main notions that constitute the basis of this manuscript. {Let $\mathbb{R}$ be the set of all real numbers, let $\mathbb{N}$ be the set of all positive integers, and let
$\mathbb{N}_{0}=\mathbb{N}\cup\{0\}$.}

\subsection{Ordering relations}

In this subsection, we recall the fundamental concepts regarding binary relations and orderings that will be used throughout this work. 

Let $X$ be a non-empty set. A {binary relation} on $X$ is a subset $\mathcal{R} \subseteq X \times X$. For simplicity, if $(x,y) \in \mathcal{R}$, we write $x \le y$ and refer to $\le$ as the binary relation on $X$. Two elements $x,y \in X$ are said to be {comparable} if either $x \le y$ or $y \le x$. A binary relation $\le$ on $X$ is said to be: (i) {reflexive} if $x \le x$ for all $x \in X$; (ii) {transitive} if $x \le y$ and $y \le z$ imply $x \le z$; (iii) {antisymmetric} if $x \le y$ and $y \le x$ imply $x = y$; (iv) {total} (or {linear}) if any two points $x,y \in X$ are comparable.

A relation satisfying reflexivity and transitivity is called a partial {preorder}. If a preorder is also antisymmetric, it is called a partial order. Partial preorders and partial orders that satisfy the totality property are referred to as {total preorders} and {total orders}, respectively. The trivial total preorder on a set $X$ is defined by $x \le y$ for all $x,y \in X$.

Given a preordered set $(K, \le)$, we can naturally associate with it an equivalence relation $\sim$ and a strict relation $<$ defined for any $x, y \in K$ as follows: i) $x\sim y \iff x \le y \text{ and } y \le x$; ii) $x < y \iff x \le y \text{ and } \text{not } (y \le x)$. Notice that if $(K, \le)$ is a totally ordered set (and hence antisymmetric), the equivalence relation $\sim$ reduces to the standard equality $=$.

To illustrate these concepts and provide context for the sequential orders we will construct later, let us recall the family of all non-empty, closed, and bounded subintervals of $\mathbb{R}$, denoted by $\operatorname{int}(\mathbb{R}) = \{[a,b] : a, b \in \mathbb{R}, a \le b\}$. Let $A = [a,b]$ and $B = [c,d]$ be two elements of $\operatorname{int}(\mathbb{R})$. The following are prominent examples of orderings defined on $\operatorname{int}(\mathbb{R})$ in the literature:

\begin{itemize}
    \item The {Kulisch-Miranker order} $\le_{KM}$ \cite{Kulisch1981}, which is a partial order but not total, defined by:
    \[ [a,b] \le_{KM} [c,d] \iff a \le c \text{ and } b \le d. \]
    \item The {Lexicographic 1 order} $\le_{Lex1}$, which is a total order defined by:
    \[ [a,b] \le_{Lex1} [c,d] \iff a < c \text{ or } (a = c \text{ and } b \le d). \]
    \item The {Lexicographic 2 order} $\le_{Lex2}$, which is a total  order defined by:
    \[ [a,b] \le_{Lex2} [c,d] \iff b < d \text{ or } (b = d \text{ and } a \le c). \]
    \item The {Xu and Yager order} $\le_{XY}$, which is a total  order defined by:
    \[ [a,b] \le_{XY} [c,d] \iff a+b < c+d \text{ or } (a+b = c+d \text{ and } b-a \le d-c). \]
    \item The {Twice Xu and Yager order} $\le_{2XY}$, which is a total order defined by:
    \[ [a,b] \le_{2XY} [c,d] \iff a+3b < c+3d \text{ or } (a+3b = c+3d \text{ and } b-a \le d-c). \]
\end{itemize}
All of these relations, with the exception of the Kulisch-Miranker order, are total orders \cite{zumelzu2022}.

\subsection{Fuzzy numbers and their ordering relations}

A fuzzy set on $\R$ is a mapping $A:\R\to\io$. For each $\alpha \in (0,1]$, the $\alpha$-cut
{of $A$} is the set $A_\alpha = \{x \in \mathbb{R} : A(x) \ge \alpha\}$,
{its support is the union of all its $\alpha$-cuts for $\alpha \in (0,1]$,}
whereas $A_0$ denotes the closure of the support of $A$ in the Euclidean topology of $\mathbb{R}$. 

A fuzzy number of $\R$ is a fuzzy set such that $A_1\neq \emptyset $, $A_0$ is a bounded {set}, and for $\alpha \in (0,1]$, $A_\alpha$ is a closed interval, i.e., $A_\alpha=[\,\underline{A}_{\, \alpha},\overline{A}_\alpha \,]$ (with $\underline{A}_{\,\alpha}\leq \overline{A}_\alpha$). Let $\operatorname{FN}(\mathbb{R})$ denote the set of all fuzzy numbers of the real line $\mathbb{R}$. The most natural way to compare two fuzzy numbers $A, B \in \operatorname{FN}(\mathbb{R})$ is through the Klir-Yuan partial order \cite{Klir1994} $\le_{KY}$, defined as:

\begin{equation*}
    A \le_{KY} B \iff \underline{A}_{\, \alpha} \le \underline{B}_{\, \alpha} \text{ and } \overline{A}_\alpha \le \overline{B}_\alpha \quad \forall \alpha \in [0,1].
\end{equation*}
Since $\le_{KY}$ is only a partial order, many fuzzy numbers are incomparable
{by $\leq_{KY}$}. To overcome this, researchers seek total orders that are `compatible'' with this natural partial order. This leads to the concept of admissibility:

\begin{definition}[\cite{Centroid_Admis}]
A total order (resp. preorder) $\le_{adm}$ on $\operatorname{FN}(\mathbb{R})$ is called an {admissible order (resp. preorder)} if it refines the Klir-Yuan partial order, i.e., for all $A, B \in \operatorname{FN}(\mathbb{R})$:
\[ A \le_{KY} B \implies A \le_{adm} B. \]
\end{definition}

One of the most classical ranking methods is the {centroid} (or center of gravity), which associates a fuzzy number $A$ with a crisp value $C(A) \in \mathbb{R}$. The centroid induces a total preorder $\le_{C}$ defined by $A \le_{C} B \iff C(A) \le C(B)$. While widely used, it is important to note that the centroid-based preorder is generally not an admissible preorder for the entire set $\operatorname{FN}(\mathbb{R})$ \cite{Centroid_Admis}.

On the other hand, the {$\alpha$-order} represents a different construction for total preorders on $\operatorname{FN}(\mathbb{R})$. Instead of a single crisp representative {value} like the centroid, the $\alpha$-order utilizes a sequence of evaluation levels. Given a sequence of values $\{\alpha_n\}_{n \in \mathbb{N}_0} \subset [0,1]$, two fuzzy numbers can be compared by some specific aggregations of their $\alpha_n$-cuts. This approach provides a systematic way to generate admissible total preorders \cite{Alphaorder_Admis}.

Finally, we recall that an IT2FN on $\R$ is defined as a mapping $A = (\underline{A}, \overline{A}) : \R \rightarrow [0, 1]\times [0,1]$, where both $\underline{A}, \overline{A}$ are fuzzy numbers and the inequality $\underline{A}(x) \leq \overline{A}(x)$
for all $x \in \R$ holds. The set of all the IT2FNs defined on $\mathbb{R}$ is denoted by $\operatorname{IFN}_2(\mathbb{R})$.

\subsection{{The }$\alpha${-order}}

In \cite{Neres2023}, the authors introduced a novel ranking methodology on a wide class of fuzzy sets (not necessarily fuzzy numbers) of the real line by employing membership values, weights, and two distribution functions as follows. 

{ In this subsection, let us fix $n\in\mathbb{N}$. We represent $\{1,2,\ldots,n\}$ by $J_{n}$, and $\{0,1,2,\ldots,n\}$ by $J_{n}^{0}$}.
A {membership degree vector} (or {membership
vector}) is a vector $\alpha=(\alpha_{1},\alpha_{2},\ldots,\alpha_{n}%
)\in[0,1]^{n}$, and a {weight vector} (or {weighting vector})
is $\omega=(\omega_{1},\omega_{2},\ldots,\omega_{n})\in[0,1]^{n}$ such
that $\omega_{1}+\omega_{2}+\ldots+\omega_{n}=1$. Henceforth, both vectors
will be considered jointly in a pair $(\alpha,\omega)$, so they will have the
same dimension ($n$) and, given $j\in J_{n}$, the value $\omega_{j}$ will be
called the {weight assigned to $\alpha_{j}$}. Let $W_{n}$ be the family
of all weight vectors in $[0,1]^{n}$, let $\Omega_{n}=(0,1]^{n}$ be the family
of all membership degree vectors (with non-null components), and let us denote
$\mathbf{0}_{n}=(0,0,\ldots,0)\in\mathbb{R}^{n}$. Notice that $\mathbf{0}%
_{n}\notin W_{n}$.

\begin{definition}[\cite{Neres2023}]
\label{def weight distribution functions}\noindent The {first weight
distribution} is the mapping $\vartheta:W_{n}\cup\{\mathbf{0}_{n}\}\rightarrow
W_{n}\cup\{\mathbf{0}_{n}\}$ given by:%
\[
\vartheta(\omega)(j)=\left\{
\begin{tabular}
[c]{ll}%
$0,$ & if $\omega_{j}=0$ or $j\in\eta(\omega),$\\[1mm]%
$\omega_{j}+\dfrac{s(\omega)}{\operatorname*{card}(\overline{\eta}(\omega))},$
& otherwise,
\end{tabular}
\right.
\]
and, given $\alpha\in\Omega_{n}$, the {second weight distribution} is the
mapping $\varphi_{\alpha}:W_{n}\cup\{\mathbf{0}_{n}\}\rightarrow W_{n}%
\cup\{\mathbf{0}_{n}\}$ given by:%
\[
\varphi_{\alpha}(\omega)(j)=\left\{
\begin{tabular}
[c]{ll}%
$0,$ & if $\omega_{j}=0$ or $j\in\theta_{\alpha}(\omega),$\\[1mm]%
$\omega_{j}+\dfrac{s_{\theta_{\alpha}}(\omega)}{\operatorname*{card}%
(\overline{\theta}_{\alpha}(\omega))},$ & otherwise
\end{tabular}
\right.
\]
($\vartheta(\omega)(j)$ is the $j$-th component of the vector $\vartheta
(\omega)$ and $\varphi_{\alpha}(\omega)(j)$ is the $j$-th component of the
vector $\varphi_{\alpha}(\omega)$).
\end{definition}
For the sake of brevity, we do not further develop the definition of the functions involved in the weight distribution. For the interested reader, they can be found in the original source \cite{Neres2023}. In order to follow this manuscript, it is enough to keep in mind that both mappings return either a weighting vector or the null vector.

\begin{definition}\label{def:class_C}
{(\cite{Neres2023})} Let $\mathcal{C}$ be the family of all fuzzy sets
$A\in {\operatorname{FS}}(\mathbb{R})$ satisfying:

\begin{itemize}
\item $A$ is normal, and its support is bounded;

\item for each $\alpha\in(0,1]$, the $\alpha$-cut $A_{\alpha}$ is a closed
subset of $\mathbb{R}$ with a finite number of connected components.
\end{itemize}
\end{definition}

Unless otherwise is stated, throughout this section $\mathcal{A}:\cup
_{n\in\mathbb{N}}\mathbb{R}^{n}\rightarrow[0,1]$ will always represent
{an extended} aggregation function on the real line, $A,B$ will be fuzzy
sets of the class $\mathcal{C}$ and $n\in\mathbb{N}$ will be the number of 
components of the membership vector $\alpha=(\alpha_{1},\alpha_{2}%
,\ldots,\alpha_{n})\in\Omega_{n}$ and the weight vector $\omega=(\omega
_{1},\omega_{2}\ldots,\omega_{n})\in W_{n}$.

\begin{definition}
{(\cite{Neres2023})} Given $k\in\mathbb{N}_{0}$, the $k$%
{-valuation of }$A$ {relative to} $\vartheta$ (respectively,
{to} $\varphi_{\alpha}$) is the real number:%
\[
v_{\alpha,\omega,\vartheta,\mathcal{A}}^{k}(A)=%
{\textstyle\sum\limits_{j=1}^{n}}
\,\left[ \,\vartheta^{k}(\omega)(j)\cdot\mathcal{A}_{\alpha_{j}}%
^{A}\,\right] \qquad\left( \text{respectively,\quad}v_{\alpha,\omega
,\varphi_{\alpha},\mathcal{A}}^{k}(A)=%
{\textstyle\sum\limits_{j=1}^{n}}
\,\left[ \,\varphi_{\alpha}^{k}(\omega)(j)\cdot\mathcal{A}_{\alpha_{j}}%
^{A}\,\right] \right) .
\]
\end{definition}

The previous definition led us to consider the $k${-valuation functions}
$v_{\alpha,\omega,\vartheta,\mathcal{A}}^{k},v_{\alpha,\omega,\varphi_{\alpha
},\mathcal{A}}^{k}:\mathcal{C}\rightarrow\mathbb{R}$. These functions can be
seen, for each $k\in\mathbb{N}_{0}$, as:%
\[
v_{\alpha,\omega,\vartheta,\mathcal{A}}^{k}(A)=\langle\,\vartheta^{k}%
(\omega),\mathcal{A}_{\alpha}^{A}\,\rangle\qquad\text{and}\qquad
v_{\alpha,\omega,\varphi_{\alpha},\mathcal{A}}^{k}(A)=\langle\,\varphi
_{\alpha}^{k}(\omega),\mathcal{A}_{\alpha}^{A}\,\rangle,
\]
{where $\langle\,\cdot\, ,\cdot\,\rangle$ represents the Euclidean scalar product of $\mathbb{R}^{n}$.}
When $k=0$, these functions satisfy that:%
\[
v_{\alpha,\omega,\vartheta,\mathcal{A}}^{0}(A)=v_{\alpha,\omega,\varphi
_{\alpha},\mathcal{A}}^{0}(A)=\langle\,\omega,\mathcal{A}_{\alpha}%
^{A}\,\rangle.
\]
{From now on, we assume that $\alpha$, $\omega$, and $\mathcal{A}$ are given in the context (and we will not mention them in definitions and statements).}
For simplicity, we denote $v_{\vartheta}^{k}=v_{\alpha,\omega,\vartheta,\mathcal{A}}^{k}$ and $v_{\varphi_{\alpha}}^{k}=v_{\alpha,\omega,\varphi_{\alpha},\mathcal{A}}^{k}$.

\begin{definition}
\label{def6.3}{(\cite{Neres2023})} Given $m\in J_{n-1}^{0}$, let
$\overset{\vartheta}{\underset{m}{=\joinrel=}}$, $\overset{\varphi_{\alpha}%
}{\underset{m}{=\joinrel=}}$ and $\overset{\alpha}{=}$ be the binary relations
on $\mathcal{C}$ defined as follows for $A,B\in\mathcal{C}$:%
\begin{align*}
& \bullet\quad A\overset{\vartheta}{\underset{m}{=\joinrel=}}B\quad
\text{iff}\quad v_{\vartheta}^{k}(A)=v_{\vartheta}^{k}(B)\text{ for each }k\in
J_{m}^{0};\\
& \bullet\quad A\overset{\varphi_{\alpha}}{\underset{m}{=\joinrel=}}%
B\quad\text{iff}\quad v_{\varphi_{\alpha}}^{k}(A)=v_{\varphi_{\alpha}}%
^{k}(B)\text{ for each }k\in J_{m}^{0};\\
& \bullet\quad A\overset{\alpha}{=}B\quad\text{iff}\quad A\overset{\vartheta
}{\underset{n-1}{=\joinrel=}}B\quad\text{and}\quad A\overset{\varphi_{\alpha}%
}{\underset{n-1}{=\joinrel=}}B.
\end{align*}
The relation \textquotedblleft$\overset{\alpha}{=}$\textquotedblright\ will be
called the $\alpha${-equality}.
\end{definition}

\begin{definition}
\label{def6.4}{(\cite{Neres2023})} Given $A,B\in\mathcal{C}$, we will
write:%
\begin{align*}
& \bullet~~A\overset{\vartheta}{<}B\quad\text{iff}\quad\left\{
\begin{tabular}
[c]{l}%
either$\quad v_{\vartheta}^{0}(A)<v_{\vartheta}^{0}(B)$\\
or$\quad\exists\,k_{0}\in J_{n-1}$ such that $A\overset{\vartheta}%
{\underset{m}{=\joinrel=}}B$ for each $m<k_{0}$\text{ and }$v_{\vartheta
}^{k_{0}}(A)<v_{\vartheta}^{k_{0}}(B);$%
\end{tabular}
\ \ \ \right. \\
& \bullet~~A\overset{\varphi_{\alpha}}{<}B\quad\text{iff}\quad\left\{
\begin{tabular}
[c]{l}%
either$\quad v_{\varphi_{\alpha}}^{0}(A)<v_{\varphi_{\alpha}}^{0}(B)$\\
or$\quad\exists\,k_{0}\in J_{n-1}$ such that $A\overset{\varphi_{\alpha}%
}{\underset{m}{=\joinrel=}}B$ for each $m<k_{0}$\text{ and }$v_{\varphi
_{\alpha}}^{k_{0}}(A)<v_{\varphi_{\alpha}}^{k_{0}}(B);$%
\end{tabular}
\ \ \ \right. \\
& \bullet~~A\overset{\alpha}{<}B\quad\text{iff}\quad A\overset{\vartheta}%
{<}B\quad\text{or}\quad\left( A\overset{\vartheta}{\underset{n-1}%
{=\joinrel=}}B\quad\text{and}\quad A\overset{\varphi_{\alpha}}{<}B\right) .
\end{align*}
The relation \textquotedblleft$\,\overset{\alpha}{<}\,$\textquotedblright%
\ will be called $\alpha${-minor}.
\end{definition}

Notice that $A\overset{\alpha}{<}B$ if, and only if,%
\[
\left\{
\begin{tabular}
[c]{l}%
either $\left\{
\begin{tabular}
[c]{l}%
$v_{\vartheta}^{0}(A)<v_{\vartheta}^{0}(B)\quad$\text{or}\\[2mm]%
$\exists\,k_{0}\in J_{n-1}$\text{ s.t. }$v_{\vartheta}^{j}(A)=v_{\vartheta
}^{j}(B),\forall j\in J_{k_{0}-1}^{0}$\text{ and }$v_{\vartheta}^{k_{0}%
}(A)<v_{\vartheta}^{k_{0}}(B),$%
\end{tabular}
\ \right. $\\[7mm]%
or\qquad$\left\{
\begin{tabular}
[c]{l}%
$v_{\vartheta}^{j}(A)=v_{\vartheta}^{j}(B),\forall j\in J_{n-1}^{0}\quad
$\text{and}\\[2mm]%
$\left\{
\begin{tabular}
[c]{l}%
$v_{\varphi_{\alpha}}^{0}(A)<v_{\varphi_{\alpha}}^{0}(B)\quad$\text{or}\\[2mm]%
$\exists\,k_{0}\in J_{n-1}$\text{ s.t. }$v_{\varphi_{\alpha}}^{j}%
(A)=v_{\varphi_{\alpha}}^{j}(B),\forall j\in J_{k_{0}-1}^{0}$\text{ and
}$v_{\varphi_{\alpha}}^{k_{0}}(A)<v_{\varphi_{\alpha}}^{k_{0}}(B).$%
\end{tabular}
\ \right. $%
\end{tabular}
\ \right. $%
\end{tabular}
\ \right.
\]

\begin{definition}
\label{H26 17 def alpha-order}Given $A,B\in\mathcal{C}$, we will write
$A\overset{\alpha}{\leq}B$ if either $A\overset{\alpha}{=}B$ or $A\overset
{\alpha}{<}B$.
\end{definition}
Note that the $\alpha$-order is indeed a preorder, while the alpha-equality is the corresponding equivalence relation \cite{Alphaorder_Admis}.
\subsection{Generation of admissible orders on fuzzy numbers}

As we said previously, admissible orders {for fuzzy numbers} are total orders that are compatible with the Klir-Yuan partial order. Below, we recall a mechanism to construct them.

\begin{definition}
\label{H26 12 def upper dense sequence}(\cite{WANG2014131}) A sequence $\{\alpha_{n}\}_{n\in\mathbb{N}} \subset (0,1]$ is called {upper dense} if for each $\beta \in (0,1]$ and each $\varepsilon > 0$, there is $m \in \mathbb{N}$ such that $\alpha_{m} \in [\beta, \beta+\varepsilon)$.
\end{definition}

Given an upper dense sequence $\{\alpha_{n}\} \subset (0,1]$ and $A, B \in \operatorname{FN}(\mathbb{R})$, it is possible to prove that $A \neq B$ if, and only if, there is $i \in \mathbb{N}$ such that $A_{\alpha_{i}} \neq B_{\alpha_{i}}$. This motivates a construction method for admissible orders.

\begin{definition}
\label{H26 15 def binary relation trianglelefteq}(\cite{zumelzu2022}) Given an upper dense sequence $\{\alpha_{n}\} \subset (0,1]$, let us define the mapping $m: \operatorname{FN}(\mathbb{R})\times \operatorname{FN}(\mathbb{R}) \rightarrow \mathbb{N}_{0}$ as:
\[
m(A,B) = \begin{cases} 
\, 0, & \text{if } A=B, \\
\, \min(\{i \in \mathbb{N} : A_{\alpha_{i}} \neq B_{\alpha_{i}}\}), & \text{if } A \neq B.
\end{cases}
\]
Let $\alpha=\{\alpha_{n}\} \subset (0,1]$ be an upper dense sequence and let $\preceq$ be a total order on $\operatorname*{int}(\mathbb{R})$ with strict relation $\prec$. We define on $\operatorname{FN}(\mathbb{R})$ the binary relation $\trianglelefteq$ as:
\begin{equation*}
A \trianglelefteq B \quad \text{if} \quad \text{( either } A=B \quad \text{or} \quad A_{\alpha_{m(A,B)}} \prec B_{\alpha_{m(A,B)}} \text{ ).}
\label{H26 15 def binary relation trianglelefteq, eq}
\end{equation*}
\end{definition}

\begin{theorem}
\label{H26 16 th trianglelefteq admissible order}(\cite{zumelzu2022}) If $\preceq$ is an admissible order on $\operatorname*{int}(\mathbb{R})$ and $\{\alpha_{n}\}$ is an upper dense sequence in $(0,1]$, then the binary relation $\trianglelefteq$ is an admissible order on $\operatorname{FN}(\mathbb{R})$.
\end{theorem}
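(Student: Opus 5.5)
The argument has four parts: show that $\trianglelefteq$ is reflexive, antisymmetric, transitive and total, and then that it refines $\le_{KY}$. All of them rest on the separation fact recalled before Definition \ref{H26 15 def binary relation trianglelefteq}: for an upper dense sequence, $A\neq B$ iff $A_{\alpha_i}\neq B_{\alpha_i}$ for some $i$. This fact is what makes $m(A,B)$ a well-defined positive integer whenever $A\neq B$, and it should be justified first. If $A_{\alpha_i}=B_{\alpha_i}$ for all $i$, fix $\beta\in(0,1]$ and pick $\alpha_{n_k}\in[\beta,\beta+1/k)$, hence $\alpha_{n_k}\downarrow\beta$ (up to a subsequence, with possibly $\alpha_{n_k}=\beta$). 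The cuts of a fuzzy number satisfy $A_\beta=\bigcap_{\gamma>\beta}A_\gamma$ (when $\beta<1$ one chooses $\alpha_{n_k}\geq\beta$ from the sequence). This gives $A_\beta=B_\beta$ for every $\beta\in(0,1]$, and therefore $A=B$.

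Reflexivity is immediate from the clause $A=B$. For totality, take $A\neq B$ and put $m=m(A,B)=m(B,A)$; the symmetry of $m$ holds by construction. Since $A_{\alpha_m}\neq B_{\alpha_m}$ and $\preceq$ is a total order on $\operatorname{int}(\mathbb{R})$, exactly one of $A_{\alpha_m}\prec B_{\alpha_m}$ or $B_{\alpha_m}\prec A_{\alpha_m}$ holds. This gives totality, and because the alternatives are exclusive it also gives antisymmetry.

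Transitivity is the step needing care. Suppose $A\trianglelefteq B\trianglelefteq C$ with $A,B,C$ pairwise distinct, and let $p=m(A,B)$ and $q=m(B,C)$. I would split into three cases.
\begin{itemize}
\item If $p<q$: then $A_{\alpha_i}=B_{\alpha_i}=C_{\alpha_i}$ for $i<p$, and $B_{\alpha_p}=C_{\alpha_p}$. Hence $m(A,C)=p$ and $A_{\alpha_p}\prec B_{\alpha_p}=C_{\alpha_p}$.
\item If $q<p$: symmetrically, $m(A,C)=q$ and $A_{\alpha_q}=B_{\alpha_q}\prec C_{\alpha_q}$.
\item If $p=q$: the cuts agree below $p$, and transitivity of $\prec$ gives $A_{\alpha_p}\prec C_{\alpha_p}$. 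In particular $A_{\alpha_p}\neq C_{\alpha_p}$, so $m(A,C)=p$.
\end{itemize}
In every case $A\trianglelefteq C$; moreover $A=C$ is impossible, by antisymmetry. This is the main bookkeeping obstacle, but it is routine once the cases are organized this way.

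For admissibility, let $A\le_{KY}B$ with $A\neq B$, and set $m=m(A,B)$. The Klir-Yuan relation gives $A_{\alpha_m}\le_{KM}B_{\alpha_m}$. Admissibility of $\preceq$ on intervals means that it refines $\le_{KM}$, so $A_{\alpha_m}\preceq B_{\alpha_m}$. Since the two cuts are distinct, this inequality is strict, $A_{\alpha_m}\prec B_{\alpha_m}$, and therefore $A\trianglelefteq B$.
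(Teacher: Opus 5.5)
\textbf{The gap: your justification of the separation fact.} The rest of your argument stands, but the identity $A_\beta=\bigcap_{\gamma>\beta}A_\gamma$ is false. The cuts shrink as $\gamma$ grows, so that intersection is just $A_1$; for a triangular number it is a single point.

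Reading it instead as a limit along $\alpha_{n_k}\downarrow\beta$ does not save it, because $\gamma\mapsto A_\gamma$ is only left-continuous. For example, take $A=1$ on $[1,2]$, $A=\tfrac12$ on $[0,1)\cup(2,3]$ and $A=0$ elsewhere. Then $A_\gamma=[1,2]$ for every $\gamma\in(\tfrac12,1]$, while $A_{1/2}=[0,3]$. Cuts at levels above $\beta$ only recover the strict level set $\{x : A(x)>\beta\}$, not $A_\beta$.

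The identity that does hold is $A_\beta=\bigcap_{\gamma<\beta}A_\gamma$. It is usable here because an upper dense sequence is also dense from below.

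The quickest repair is pointwise:
\begin{itemize}
\item If $A\neq B$, pick $x$ with, say, $B(x)<A(x)$.
\item Apply upper density at $\beta=\tfrac12(A(x)+B(x))$ with $\varepsilon=\tfrac12(A(x)-B(x))$. This gives some $\alpha_i\in(B(x),A(x))$.
\item Then $x\in A_{\alpha_i}\setminus B_{\alpha_i}$.
\end{itemize}
The paper never proves this fact; it only recalls it. So you could also simply cite it rather than prove it.

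\textbf{Everything else is correct and matches the paper.} Your order axioms and admissibility step are the paper's own argument, written directly on $\operatorname{FN}(\mathbb{R})$. The paper obtains this theorem as Theorem~\ref{th:seq_admis}, since $\trianglelefteq$ coincides with $\le_{\Phi_{adm}}$ for $\Phi_{adm}(A)=\{A_{\alpha_n}\}$. It gets a total order from Corollary~\ref{coro:Seq_Order} together with injectivity of $\Phi_{adm}$. The transitivity step behind that corollary, in Theorem~\ref{th:preorder_sucesiones}, is exactly your three-case split on the first differing index. Admissibility comes from the same first-difference argument you give.
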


\section{Sequential ordering relations}
\label{sec:sequential}

The fundamental challenge in comparing complex mathematical objects lies in capturing their multidimensional nature without reducing them to overly simplistic single-value metrics. Classical ranking methods frequently suffer from severe information loss, leading to total preorders with broad equivalence classes. On the other hand, recent approaches such as admissible orders based on dense sequences \cite{zumelzu2022} or alpha orders \cite{Neres2023}, rely on sophisticated approaches that become highly technical. To overcome these limitations, this section introduces the core theoretical contribution of this manuscript: the generalized sequential ordering framework. By establishing a sequence space $\operatorname{seq}(K)$ built over a totally preordered base space $(K, \le)$, and defining an evaluation mapping $\Phi$, we construct a flexible lexicographical structure to make comparisons. This framework is designed to sequentially resolve ties, providing a unified mathematical umbrella that naturally absorbs existing ranking methods for fuzzy numbers.
\subsection{Orderings on sequences of a totally preordered set}

Let $(K, \lesssim)$ be a totally preordered set. We denote by $\operatorname{seq}(K)$ the family of all sequences of elements in $K$ indexed on $\N_{0}$, that is:
$$\operatorname{seq}(K) = K^{\N_{0}} = \{\{a_n\}_{n\in\N_{0}} : a_n \in K \text{ for each } n \in \N_{0}\}.$$

For simplicity, we denote a sequence $\{a_n\}_{n\in\N_{0}} \in \operatorname{seq}(K)$ by $\mathfrak{a} = \{a_n\}$. Given $\mathfrak{a} = \{a_n\}, \mathfrak{b} = \{b_n\} \in \operatorname{seq}(K)$, we will write $\mathfrak{a} \equiv \mathfrak{b}$ when $a_n \sim b_n$ for all $n \in \N_{0}$.

\begin{definition}\label{def:order_for_sequences}
Given $\mathfrak{a} = \{a_n\}$ and $\mathfrak{b} = \{b_n\} \in \operatorname{seq}(K)$, we will write $\mathfrak{a} \sqsubseteq \mathfrak{b}$ when one, and only one, of the following conditions holds:
\begin{enumerate}
    \item $\mathfrak{a} \equiv \mathfrak{b}$,
    \item there is $n_0 \in \N_{0}$ such that $a_n \sim b_n$ for each $n \in J_{n_0-1}^0$ and $a_{n_0} \prec b_{n_0}$.
\end{enumerate}
(If $n_0=0$, we agree that the condition $a_n \sim b_n$ for $n \in J_{-1}^0$ is empty, meaning simply $a_0 \prec b_0$).
\end{definition}

\begin{theorem}\label{th:preorder_sucesiones}
If $(K, \lesssim)$ is a totally preordered set, the binary relation $\sqsubseteq$ is a total preorder on $\operatorname{seq}(K)$.
\end{theorem}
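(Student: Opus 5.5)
We check reflexivity, totality and transitivity of $\sqsubseteq$ directly from Definition~\ref{def:order_for_sequences}. Throughout we use the standard facts about the total preorder $(K,\lesssim)$, with $\sim$ its associated equivalence and $\prec$ its strict part. First, $\sim$ is an equivalence relation. Second, for any $x,y\in K$ exactly one of $x\prec y$, $x\sim y$, $y\prec x$ holds; this follows from totality. Third, $\prec$ is transitive, and it is compatible with $\sim$: if $x\sim x'$, $x\prec y$ and $y\sim y'$, then $x'\prec y'$. These follow routinely from transitivity of $\lesssim$.

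We also record that conditions 1 and 2 of Definition~\ref{def:order_for_sequences} are mutually exclusive, so the phrase ``one, and only one'' causes no trouble. Condition 2 requires $a_{n_0}\prec b_{n_0}$, which contradicts $a_{n_0}\sim b_{n_0}$.

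\textbf{Reflexivity.} For any $\mathfrak a$ we have $a_n\sim a_n$ for all $n$. Hence $\mathfrak a\equiv\mathfrak a$, and so $\mathfrak a\sqsubseteq\mathfrak a$.

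\textbf{Totality.} Suppose $\mathfrak a\not\equiv\mathfrak b$. Then the set $\{n\in\N_0 : a_n\not\sim b_n\}$ is nonempty. Since $\N_0$ is well ordered, this set has a least element $n_0$. By minimality, $a_n\sim b_n$ for all $n<n_0$. By trichotomy, either $a_{n_0}\prec b_{n_0}$ or $b_{n_0}\prec a_{n_0}$. In the first case $\mathfrak a\sqsubseteq\mathfrak b$, and in the second $\mathfrak b\sqsubseteq\mathfrak a$, using the symmetry of $\sim$.

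Both this step and the transitivity argument rely on a uniqueness observation. If $\mathfrak a\sqsubseteq\mathfrak b$ via condition 2, then $n_0$ is exactly the first index at which $a_n\not\sim b_n$. Call this index $d(\mathfrak a,\mathfrak b)$; it is symmetric in $\mathfrak a$ and $\mathfrak b$.

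\textbf{Transitivity.} This is the main, though still elementary, step. Assume $\mathfrak a\sqsubseteq\mathfrak b$ and $\mathfrak b\sqsubseteq\mathfrak c$, and split into cases.

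\emph{Both relations via condition 1.} Transitivity of $\sim$ gives $\mathfrak a\equiv\mathfrak c$.

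\emph{Exactly one relation via condition 1.} Suppose, say, $\mathfrak a\equiv\mathfrak b$ and $\mathfrak b\sqsubseteq\mathfrak c$ at index $n_0$. Replacing each $b_n$ by the equivalent $a_n$ and using compatibility of $\prec$ with $\sim$, we get $a_n\sim c_n$ for $n<n_0$ and $a_{n_0}\prec c_{n_0}$. The symmetric case is handled the same way.

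\emph{Both relations via condition 2.} Let the indices be $n_1=d(\mathfrak a,\mathfrak b)$ and $n_2=d(\mathfrak b,\mathfrak c)$, and set $n_0=\min(n_1,n_2)$. For $n<n_0$ we have $a_n\sim b_n\sim c_n$, hence $a_n\sim c_n$. At index $n_0$ there are three subcases:
\begin{itemize}
\item if $n_1<n_2$, then $a_{n_0}\prec b_{n_0}\sim c_{n_0}$;
\item if $n_2<n_1$, then $a_{n_0}\sim b_{n_0}\prec c_{n_0}$;
\item if $n_1=n_2$, then $a_{n_0}\prec b_{n_0}\prec c_{n_0}$.
\end{itemize}
Each subcase yields $a_{n_0}\prec c_{n_0}$, so $\mathfrak a\sqsubseteq\mathfrak c$ via condition 2. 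The only care needed here is bookkeeping for the $\min$ of the indices.

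Reflexivity, totality and transitivity together show that $\sqsubseteq$ is a total preorder on $\operatorname{seq}(K)$.
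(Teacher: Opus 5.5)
Your proof is correct and follows essentially the same argument as the paper: reflexivity of $\sim$ gives reflexivity, totality comes from the least index where $a_n \not\sim b_n$ plus trichotomy in $K$, and transitivity comes from taking the minimum of the two divergence indices and checking the three subcases. Your extra remarks, that the two conditions are mutually exclusive and that the divergence index is unique, are harmless refinements the paper leaves implicit.
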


\begin{proof}
The reflexivity of $\sqsubseteq$ immediately follows from the reflexivity of $\sim$.

To show the transitivity, let $\mathfrak{a}=\{a_{n}\}$, $\mathfrak{b}=\{b_{n}\}$, $\mathfrak{c}=\{c_{n}\}\in \operatorname{seq}(K)$ be such that $\mathfrak{a}\sqsubseteq\mathfrak{b}$ and $\mathfrak{b}\sqsubseteq\mathfrak{c}$.
If $\mathfrak{a}\equiv\mathfrak{b}$ or $\mathfrak{b}\equiv\mathfrak{c}$, it trivially follows that $\mathfrak{a}\sqsubseteq\mathfrak{c}$ because $\sim$ is transitive and preserves strict inequalities. Next, suppose $\mathfrak{a}\not\equiv\mathfrak{b}$ and $\mathfrak{b}\not\equiv\mathfrak{c}$. This implies there exist indices $n_0, n_1 \in \N_0$ such that:
\begin{itemize}
    \item $a_n \sim b_n$ for $n < n_0$ and $a_{n_0} \prec b_{n_0}$,
    \item $b_n \sim c_n$ for $n < n_1$ and $b_{n_1} \prec c_{n_1}$.
\end{itemize}
Let $n_2 = \min\{n_0, n_1\}$. Because $\sim$ is an equivalence relation, for all $n < n_2$, we have $a_n \sim b_n \sim c_n$, so $a_n \sim c_n$. 
If $n_2 = n_0 < n_1$, then $a_{n_0} \prec b_{n_0} \sim c_{n_0}$, which implies $a_{n_0} \prec c_{n_0}$. 
If $n_2 = n_1 < n_0$, then $a_{n_1} \sim b_{n_1} \prec c_{n_1}$, which implies $a_{n_1} \prec c_{n_1}$. 
If $n_2 = n_0 = n_1$, then $a_{n_2} \prec b_{n_2} \prec c_{n_2}$, implying $a_{n_2} \prec c_{n_2}$. 
In all cases, $\mathfrak{a}\sqsubseteq\mathfrak{c}$.

Finally, to study the totality, let $\mathfrak{a}=\{a_{n}\}$, $\mathfrak{b}=\{b_{n}\}\in \operatorname{seq}(K)$. If $\mathfrak{a}\equiv\mathfrak{b}$, then $\mathfrak{a} \sqsubseteq \mathfrak{b}$. Suppose $\mathfrak{a}\not\equiv\mathfrak{b}$. There exists a minimum index $n_0=\min(\{n\in\N_{0} : \text{not }(a_{n} \sim b_{n})\})$. Since $K$ is a
{totally preordered set}, for this specific index $n_0$, either $a_{n_0} \prec b_{n_0}$ (implying $\mathfrak{a}\sqsubseteq\mathfrak{b}$) or $b_{n_0} \prec a_{n_0}$ (implying $\mathfrak{b}\sqsubseteq\mathfrak{a}$). Therefore, the sequences are always comparable, and the relation is total.
\end{proof}

When the base space $K$ is not just a total {preordered set}, but a totally {ordered set}, the equivalence relation $\sim$ reduces to strict equality ($=$). In this scenario, the sequential order on $\operatorname{seq}(K)$ gains the {antisymmetric} property.

\begin{corollary}\label{coro:order_sucesiones}
If $(K, \preceq)$ is a totally ordered set, then the binary relation $\sqsubseteq$ is a total order on $\operatorname{seq}(K)$.
\end{corollary}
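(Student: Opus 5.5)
The plan is to obtain the corollary directly from Theorem \ref{th:preorder_sucesiones}. A totally ordered set is in particular a totally preordered set, so that theorem already gives that $\sqsubseteq$ is reflexive, transitive and total on $\operatorname{seq}(K)$. Only antisymmetry remains to be checked.

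The key observation is that when $\preceq$ is antisymmetric, the associated equivalence $\sim$ coincides with equality on $K$: $a\sim b$ means $a\preceq b$ and $b\preceq a$, hence $a=b$. Consequently $\mathfrak{a}\equiv\mathfrak{b}$ holds if and only if $a_n=b_n$ for every $n\in\N_0$, that is, if and only if $\mathfrak{a}=\mathfrak{b}$ as sequences.

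For antisymmetry, let $\mathfrak{a}\sqsubseteq\mathfrak{b}$ and $\mathfrak{b}\sqsubseteq\mathfrak{a}$, and argue by contradiction assuming $\mathfrak{a}\not\equiv\mathfrak{b}$. Condition 2 of Definition \ref{def:order_for_sequences} then applies in both directions. It gives indices $n_0,n_1$ with $a_n\sim b_n$ for $n<n_0$ and $a_{n_0}\prec b_{n_0}$, and also $b_n\sim a_n$ for $n<n_1$ and $b_{n_1}\prec a_{n_1}$. Each of these indices must equal the first index at which $a_n\sim b_n$ fails. To see this, note that strictness at $n_0$ excludes $a_{n_0}\sim b_{n_0}$, and the same holds at $n_1$. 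Hence $n_0=n_1$.

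At this common index we would have both $a_{n_0}\prec b_{n_0}$ and $b_{n_0}\prec a_{n_0}$, which contradicts the definition of the strict relation. Therefore $\mathfrak{a}\equiv\mathfrak{b}$, and by the observation above, $\mathfrak{a}=\mathfrak{b}$.

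There is no real obstacle. The only step needing care is showing that the two witnessing indices coincide, and that is exactly the minimality argument already used in the totality part of the proof of Theorem \ref{th:preorder_sucesiones}.
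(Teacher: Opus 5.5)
Your proposal is correct and follows essentially the same route as the paper. Both invoke Theorem \ref{th:preorder_sucesiones} for reflexivity, transitivity and totality, and then obtain antisymmetry by contradiction at the first index where the sequences disagree, using that $\sim$ reduces to equality. Your explicit argument that both witnessing indices must coincide with that first index is a useful detail: the paper's proof leaves it implicit when it asserts that $a_{n_0}\prec b_{n_0}$ rules out $\mathfrak{b}\sqsubseteq\mathfrak{a}$.
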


\begin{proof}
By Theorem \ref{th:preorder_sucesiones}, $\sqsubseteq$ is a total preorder on $\operatorname{seq}(K)$. It only remains to prove antisymmetry. Let $\mathfrak{a}=\{a_{n}\}$ and $\mathfrak{b}=\{b_{n}\}\in \operatorname{seq}(K)$ be such that $\mathfrak{a}\sqsubseteq\mathfrak{b}$ and $\mathfrak{b}\sqsubseteq\mathfrak{a}$. To obtain a contradiction, suppose that $\mathfrak{a} \neq \mathfrak{b}$. Since $K$ is a totally ordered set, $\mathfrak{a} \neq \mathfrak{b}$ implies $\mathfrak{a} \not\equiv \mathfrak{b}$. Thus, there is a minimum index $n_0 \in \N_0$ such that $a_{n_0} \neq b_{n_0}$. Because $K$ is totally ordered, either $a_{n_0} \prec b_{n_0}$ or $b_{n_0} \prec a_{n_0}$. 
If $a_{n_0} \prec b_{n_0}$, the condition $\mathfrak{b}\sqsubseteq\mathfrak{a}$ cannot hold. If $b_{n_0} \prec a_{n_0}$, the condition $\mathfrak{a}\sqsubseteq\mathfrak{b}$ cannot hold. This is a contradiction. Therefore, $\mathfrak{a}=\mathfrak{b}$, and $\sqsubseteq$ is antisymmetric.
\end{proof}



\subsection{Ordering relations induced by sequences}

In this section, we introduce a broad family of binary relations on arbitrary sets that behave as total preorders and, under specific conditions, become total orders. 

\begin{definition}\label{def:seq_bin_rel}
Given a non-empty set $X$, a totally preordered set $(K, \preceq)$, and a mapping $\Phi: X \rightarrow \operatorname{seq}(K)$, we define the {$\Phi$-sequential binary relation} $\lesssim_\Phi$ on $X$ such that, for each $x, y \in X$:
$$x \lesssim_\Phi y \quad \text{if} \quad \Phi(x) \sqsubseteq \Phi(y).$$
\end{definition}

\begin{theorem}\label{th:Seq_Preorder}
Given a non-empty set $X$, a totally preordered set $(K, \preceq)$, and a mapping $\Phi: X \rightarrow \operatorname{seq}(K)$, the $\Phi$-sequential binary relation $\lesssim_\Phi$ is a total preorder on $X$.
\end{theorem}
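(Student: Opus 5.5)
The plan is to pull back the total preorder $\sqsubseteq$ of Theorem \ref{th:preorder_sucesiones} along the map $\Phi$. The key observation is that the three properties defining a total preorder (reflexivity, transitivity, totality) are each statements about finitely many elements. Each is quantified universally over the underlying set, and none of them requires $\Phi$ to be injective or surjective. Antisymmetry is not required here, so no injectivity hypothesis is needed. The only preliminary is to note that by Theorem \ref{th:preorder_sucesiones}, applied to the totally preordered set $(K,\preceq)$, the relation $\sqsubseteq$ is a total preorder on $\operatorname{seq}(K)$.

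The three properties then transfer as follows.

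\emph{Reflexivity.} For $x\in X$, reflexivity of $\sqsubseteq$ gives $\Phi(x)\sqsubseteq\Phi(x)$, so $x\lesssim_\Phi x$.

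\emph{Transitivity.} Take $x,y,z\in X$ with $x\lesssim_\Phi y$ and $y\lesssim_\Phi z$. By Definition \ref{def:seq_bin_rel} this means $\Phi(x)\sqsubseteq\Phi(y)$ and $\Phi(y)\sqsubseteq\Phi(z)$. Transitivity of $\sqsubseteq$ yields $\Phi(x)\sqsubseteq\Phi(z)$, that is, $x\lesssim_\Phi z$.

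\emph{Totality.} For $x,y\in X$, the sequences $\Phi(x)$ and $\Phi(y)$ are comparable under $\sqsubseteq$. Hence $x\lesssim_\Phi y$ or $y\lesssim_\Phi x$.

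There is no genuine obstacle in this argument. The one point deserving care is that all the substantive work, the lexicographic case analysis on the first index where the sequences differ, has already been done in Theorem \ref{th:preorder_sucesiones}; the present result is purely formal once that is available. I would add a closing remark anticipating the total-order case. There $\lesssim_\Phi$ becomes antisymmetric precisely when $\Phi(x)\equiv\Phi(y)$ forces $x=y$. For instance, this holds if $K$ is totally ordered (Corollary \ref{coro:order_sucesiones}) and $\Phi$ is injective.
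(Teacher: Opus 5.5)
Your proposal is correct and matches the paper's proof: both invoke Theorem \ref{th:preorder_sucesiones} and transfer reflexivity, transitivity and totality of $\sqsubseteq$ back through $\Phi$ via Definition \ref{def:seq_bin_rel}, with no injectivity needed. Your closing remark on antisymmetry is also accurate and is consistent with Corollary \ref{coro:Seq_Order}.
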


\begin{proof}
Let $x, y, z \in X$. From Theorem \ref{th:preorder_sucesiones}, we know that the binary relation $\sqsubseteq$ is a total preorder on $\operatorname{seq}(K)$. Consequenly, 
\begin{enumerate}
    \item {Reflexivity:} Since $\sqsubseteq$ is reflexive on $\operatorname{seq}(K)$, we have $\Phi(x) \sqsubseteq \Phi(x)$ for all $x \in X$. Thus, $x \lesssim_\Phi x$.
    \item {Transitivity:} Suppose $x \lesssim_\Phi y$ and $y \lesssim_\Phi z$. By definition, $\Phi(x) \sqsubseteq \Phi(y)$ and $\Phi(y) \sqsubseteq \Phi(z)$. Since $\sqsubseteq$ is transitive on $\operatorname{seq}(K)$, it follows that $\Phi(x) \sqsubseteq \Phi(z)$, which implies $x \lesssim_\Phi z$.
    \item {Totality:} For any $x, y \in X$, the sequences $\Phi(x)$ and $\Phi(y)$ are elements of $\operatorname{seq}(K)$. Because $\sqsubseteq$ is total on $\operatorname{seq}(K)$, either $\Phi(x) \sqsubseteq \Phi(y)$ or $\Phi(y) \sqsubseteq \Phi(x)$. Consequently, either $x \lesssim_\Phi y$ or $y \lesssim_\Phi x$, proving that $\lesssim_\Phi$ is total on $X$.
\end{enumerate}
\end{proof}

To recover the antisymmetry property necessary for a partial order, we must restrict our base space to a totally ordered set and require the mapping $\Phi$ to separate distinct elements.

\begin{corollary}\label{coro:Seq_Order}
Let $X$ be a non-empty set, $(K, \preceq)$ a totally ordered set, and $\Phi: X \rightarrow \operatorname{seq}(K)$ an injective mapping. Then the $\Phi$-sequential binary relation $\preceq_\Phi$ is a total order on $X$.
\end{corollary}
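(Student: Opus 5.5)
The plan is to obtain the total preorder structure from Theorem \ref{th:Seq_Preorder} and then add antisymmetry via Corollary \ref{coro:order_sucesiones} and the injectivity of $\Phi$. Since $(K,\preceq)$ is a totally ordered set, it is in particular a totally preordered set. Theorem \ref{th:Seq_Preorder} therefore applies directly and shows that $\preceq_\Phi$ is reflexive, transitive and total on $X$. What remains is to verify antisymmetry.

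For antisymmetry, take $x,y\in X$ with $x\preceq_\Phi y$ and $y\preceq_\Phi x$. By Definition \ref{def:seq_bin_rel}, this means $\Phi(x)\sqsubseteq\Phi(y)$ and $\Phi(y)\sqsubseteq\Phi(x)$ in $\operatorname{seq}(K)$. Because $K$ is totally ordered, Corollary \ref{coro:order_sucesiones} says that $\sqsubseteq$ is a total order on $\operatorname{seq}(K)$, hence antisymmetric. It follows that $\Phi(x)=\Phi(y)$ as sequences, and injectivity of $\Phi$ then gives $x=y$.

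There is essentially no obstacle here. The only point to check is that, in a total order, the equivalence $\sim$ coincides with equality. This guarantees that $\equiv$ on $\operatorname{seq}(K)$ is genuine termwise equality, which is exactly what Corollary \ref{coro:order_sucesiones} already uses. With that in place, $\preceq_\Phi$ is a total order on $X$.
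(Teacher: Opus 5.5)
Your proof is correct and follows the paper's argument essentially verbatim. Theorem \ref{th:Seq_Preorder} gives reflexivity, transitivity and totality, and antisymmetry follows from the antisymmetry of $\sqsubseteq$ on $\operatorname{seq}(K)$ (Corollary \ref{coro:order_sucesiones}) together with the injectivity of $\Phi$; your remark that $\sim$ reduces to equality in a totally ordered $K$ is precisely what underlies that corollary, so nothing is missing.
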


\begin{proof}
Since $(K, \preceq)$ is a totally ordered set, its induced relation $\sqsubseteq$ on $\operatorname{seq}(K)$ becomes a total order. By Theorem \ref{th:Seq_Preorder}, $\lesssim_\Phi$ is already reflexive, transitive, and total. 

To prove
{the}
antisymmetry, let $x, y \in X$ be such that $x \preceq_\Phi y$ and $y \preceq_\Phi x$. By definition, this implies $\Phi(x) \sqsubseteq \Phi(y)$ and $\Phi(y) \sqsubseteq \Phi(x)$. Given that $\sqsubseteq$ is antisymmetric on $\operatorname{seq}(K)$, it follows that $\Phi(x) = \Phi(y)$. Finally, because $\Phi$ is an injective mapping, $\Phi(x) = \Phi(y)$ implies $x = y$. Thus, $\preceq_\Phi$ is antisymmetric, making it a total order.
\end{proof}

\subsection{Algebraic properties of sequential orders}\label{sec:algebraic}

A fundamental requirement for any practical ordering framework is its compatibility with underlying algebraic structures. In this section, we generalize the analysis of algebraic compatibility to our sequential framework. We establish the conditions under which the $\Phi$-sequential binary relation $\le_\Phi$ on an arbitrary set $X$ preserves internal binary operations (such as addition) and external operations (such as scalar multiplication).

Let $X$ be a non-empty set endowed with a binary operation $\oplus: X \times X \to X$. A binary relation $\le$ on $X$ is said to be {compatible with $\oplus$} if for all $x, y, z \in X$:
\[ x \le y \implies x \oplus z \le y \oplus z \quad \text{and} \quad z \oplus x \le z \oplus y. \]

To analyze the sequential relation $\le_\Phi$, we must first evaluate the behavior of the sequence space itself. Let $(K, \le, +)$ be a totally preordered set endowed with a binary operation $+$. We naturally define the term-wise operation on $\operatorname{seq}(K)$ as $\mathfrak{a} +_{\operatorname*{seq}} \mathfrak{b} = \{a_n + b_n\}_{n \in \mathbb{N}_{0}}$.
{The operation $+$ is said to be strictly translation invariant if for all $u,v,w\in K$ such that $u\sim v$, it holds $u+w\sim v+w\text{ and }w+u\sim w+v$, and $u<v$ implies that $u+w<v+w$ and $w+u<w+v$.}

\begin{lemma}
\label{lemma_seq_compatibility}
Let $(K, \le, +)$ be a totally preordered set where the operation $+$ is strictly translation invariant.
Then, the sequential total preorder $\sqsubseteq$ is compatible with $+_{\operatorname*{seq}}$ on $\operatorname{seq}(K)$. That is, for any $\mathfrak{a}, \mathfrak{b}, \mathfrak{c} \in \operatorname{seq}(K)$:
\[ \mathfrak{a} \sqsubseteq \mathfrak{b} \implies ( \, \mathfrak{a} +_{\operatorname*{seq}} \mathfrak{c} \sqsubseteq \mathfrak{b} +_{\operatorname*{seq}} \mathfrak{c} \quad \text{and} \quad \mathfrak{c} +_{\operatorname*{seq}} \mathfrak{a} \sqsubseteq \mathfrak{c} +_{\operatorname*{seq}} \mathfrak{b} \,). \]
\end{lemma}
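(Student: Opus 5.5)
The plan is a direct case analysis on the two clauses of Definition \ref{def:order_for_sequences}, applying strict translation invariance term by term. I will write out only the right-translation claim $\mathfrak{a} +_{\operatorname*{seq}} \mathfrak{c} \sqsubseteq \mathfrak{b} +_{\operatorname*{seq}} \mathfrak{c}$. The left-translation claim $\mathfrak{c} +_{\operatorname*{seq}} \mathfrak{a} \sqsubseteq \mathfrak{c} +_{\operatorname*{seq}} \mathfrak{b}$ is identical, using the left-hand parts of the invariance hypothesis ($w+u\sim w+v$ and $w+u<w+v$).

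Fix $\mathfrak{a}=\{a_n\}$, $\mathfrak{b}=\{b_n\}$, $\mathfrak{c}=\{c_n\}$ with $\mathfrak{a}\sqsubseteq\mathfrak{b}$.

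\emph{First case: $\mathfrak{a}\equiv\mathfrak{b}$.} Then $a_n\sim b_n$ for every $n\in\N_0$. The $\sim$-part of strict translation invariance gives $a_n+c_n\sim b_n+c_n$ for every $n$. Hence $\mathfrak{a}+_{\operatorname*{seq}}\mathfrak{c}\equiv\mathfrak{b}+_{\operatorname*{seq}}\mathfrak{c}$, which is condition 1 of the definition.

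\emph{Second case.} There is $n_0\in\N_0$ with $a_n\sim b_n$ for all $n<n_0$ and $a_{n_0}\prec b_{n_0}$. For $n<n_0$ I again get $a_n+c_n\sim b_n+c_n$. At the index $n_0$, the strict part of the hypothesis gives $a_{n_0}+c_{n_0}\prec b_{n_0}+c_{n_0}$. This is exactly condition 2 of the definition for the translated sequences, with the same witness $n_0$; the convention for $n_0=0$ carries over unchanged.

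I see no genuine obstacle; the only point of care is bookkeeping. The paper writes the strict relation of the base preorder both as $<$ (in the hypothesis on $+$) and as $\prec$ (in Definition \ref{def:order_for_sequences}), and I will note that these are the same relation. I will also check the ``one, and only one'' wording of the definition. The two conditions are mutually exclusive: condition 2 forces $a_{n_0}\prec b_{n_0}$, which contradicts $a_{n_0}\sim b_{n_0}$. So exhibiting either condition establishes $\sqsubseteq$.
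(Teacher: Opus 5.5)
Your proposal is correct and matches the paper's proof. Both split into the same two cases of Definition \ref{def:order_for_sequences}. Both use the $\sim$-part of strict translation invariance termwise and the strict part at the index $n_0$, and both treat the left-sided claim by symmetry. Your remarks that $<$ and $\prec$ denote the same relation, and that the two clauses are mutually exclusive, are bookkeeping points the paper leaves implicit.
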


\begin{proof}
We prove the right-sided compatibility (the left-sided proof is analogous). Assume $\mathfrak{a} \sqsubseteq \mathfrak{b}$. According to Definition \ref{def:order_for_sequences}, there are two possibilities:
\begin{enumerate}
    \item $\mathfrak{a} \equiv \mathfrak{b}$, meaning $a_n \sim b_n$ for all $n \in \mathbb{N}_{0}$. By the translation invariance of $\sim$ on $K$, we have $a_n + c_n \sim b_n + c_n$ for all $n \in \mathbb{N}_{0}$. Thus, $\mathfrak{a} +_{\operatorname*{seq}} \mathfrak{c} \equiv \mathfrak{b} +_{\operatorname*{seq}} \mathfrak{c}$, which trivially implies $\mathfrak{a} +_{\operatorname*{seq}} \mathfrak{c} \sqsubseteq \mathfrak{b} +_{\operatorname*{seq}} \mathfrak{c}$.
    
    \item There exists a minimum index $n_0 \in \mathbb{N}_{0}$ such that $a_n \sim b_n$ for all $n < n_0$, and $a_{n_0} < b_{n_0}$. By the properties of $K$, for all $n < n_0$, $a_n + c_n \sim b_n + c_n$. Furthermore, the strict relation is preserved at the divergence index $n_0$, so $a_{n_0} + c_{n_0} < b_{n_0} + c_{n_0}$. Therefore, by the definition of the sequence order, $\mathfrak{a} +_{\operatorname*{seq}} \mathfrak{c} \sqsubseteq \mathfrak{b} +_{\operatorname*{seq}} \mathfrak{c}$.
\end{enumerate}
In both cases, compatibility holds.
\end{proof}

With the algebraic stability of the sequence space guaranteed, the compatibility of the generalized $\Phi$-sequential order $\le_\Phi$ depends entirely on whether the mapping $\Phi$ preserves the algebraic structure of the set $X$.

\begin{definition}
Let $(X, \oplus)$ and $(K, +)$ be sets endowed with binary operations. A mapping $\Phi: X \to \operatorname{seq}(K)$ is said to {preserve the operation} (or to be a homomorphism) if, for all $x, y \in X$:
\[ \Phi(x \oplus y) \equiv \Phi(x) +_{\operatorname*{seq}} \Phi(y). \]
\end{definition}

\begin{theorem}
\label{th_algebraic_compatibility}
Let $(X, \oplus)$ be a set with a binary operation, and let $(K, \le, +)$ be a totally preordered set where $+$ is strictly translation invariant. If the mapping $\Phi: X \to \operatorname{seq}(K)$ preserves the operation, then the $\Phi$-sequential binary relation $\le_\Phi$ is compatible with $\oplus$ on $X$.
\end{theorem}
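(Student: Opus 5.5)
The plan is to reduce the statement to Lemma \ref{lemma_seq_compatibility} via the homomorphism property of $\Phi$, and then pass from $\equiv$ back to $\sqsubseteq$ using the transitivity of $\sqsubseteq$ from Theorem \ref{th:preorder_sucesiones}. I treat right-sided compatibility in detail; the left-sided case is symmetric, using the second half of Lemma \ref{lemma_seq_compatibility}.

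Fix $x,y,z\in X$ with $x\le_\Phi y$, which by Definition \ref{def:seq_bin_rel} means $\Phi(x)\sqsubseteq\Phi(y)$. Apply Lemma \ref{lemma_seq_compatibility} with $\mathfrak{a}=\Phi(x)$, $\mathfrak{b}=\Phi(y)$, $\mathfrak{c}=\Phi(z)$. This is legitimate because $+$ is strictly translation invariant. It gives
\[
\Phi(x)+_{\operatorname*{seq}}\Phi(z)\sqsubseteq\Phi(y)+_{\operatorname*{seq}}\Phi(z).
\]
Since $\Phi$ preserves the operation, we also have $\Phi(x\oplus z)\equiv\Phi(x)+_{\operatorname*{seq}}\Phi(z)$ and $\Phi(y\oplus z)\equiv\Phi(y)+_{\operatorname*{seq}}\Phi(z)$.

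The one point needing care is that homomorphism is only required up to $\equiv$, not equality. So I must check that $\mathfrak{a}\equiv\mathfrak{a}'$ implies $\mathfrak{a}\sqsubseteq\mathfrak{a}'$ and $\mathfrak{a}'\sqsubseteq\mathfrak{a}$. This holds directly from condition 1 of Definition \ref{def:order_for_sequences}, using the symmetry of $\sim$ in $K$. Then transitivity of $\sqsubseteq$ gives the chain
\[
\Phi(x\oplus z)\sqsubseteq\Phi(x)+_{\operatorname*{seq}}\Phi(z)\sqsubseteq\Phi(y)+_{\operatorname*{seq}}\Phi(z)\sqsubseteq\Phi(y\oplus z).
\]
Hence $x\oplus z\le_\Phi y\oplus z$. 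Repeating the argument with $\mathfrak{c}+_{\operatorname*{seq}}\mathfrak{a}\sqsubseteq\mathfrak{c}+_{\operatorname*{seq}}\mathfrak{b}$ gives $z\oplus x\le_\Phi z\oplus y$.

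I expect no real obstacle. The only subtle step is the $\equiv$-to-$\sqsubseteq$ passage. It is immediate, but it should be stated explicitly, since it is the sole place where the weak form of the homomorphism condition enters.
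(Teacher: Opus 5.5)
Your proposal is correct and follows the paper's proof: you apply Lemma~\ref{lemma_seq_compatibility} to $\Phi(x),\Phi(y),\Phi(z)$ and then use the homomorphism property to transfer the result to $\Phi(x\oplus z)$ and $\Phi(y\oplus z)$. The only difference is that you spell out the step the paper merely asserts, namely that $\equiv$-equivalent sequences can be swapped inside $\sqsubseteq$; you justify it with condition 1 of Definition~\ref{def:order_for_sequences}, the symmetry of $\sim$, and transitivity from Theorem~\ref{th:preorder_sucesiones}.
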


\begin{proof}
Let $x, y, z \in X$ and assume $x \le_\Phi y$. By definition, this means $\Phi(x) \sqsubseteq \Phi(y)$. Since $\Phi$ preserves the operation, we have $\Phi(x \oplus z) \equiv \Phi(x) +_{\operatorname*{seq}} \Phi(z)$ and $\Phi(y \oplus z) \equiv \Phi(y) +_{\operatorname*{seq}} \Phi(z)$. By Lemma \ref{lemma_seq_compatibility}, the condition $\Phi(x) \sqsubseteq \Phi(y)$ implies $\Phi(x) +_{\operatorname*{seq}} \Phi(z) \sqsubseteq \Phi(y) +_{\operatorname*{seq}} \Phi(z)$. Since the equivalence relation $\equiv$ maintains the order $\sqsubseteq$, we conclude that $\Phi(x \oplus z) \sqsubseteq \Phi(y \oplus z)$, which exactly means $x \oplus z \le_\Phi y \oplus z$. The proof for $z \oplus x \le_\Phi z \oplus y$ follows identical logic.
\end{proof}

This algebraic preservation easily extends to external operations. Let $\Omega$ be a set of operators (scalars), and let $\odot: \Omega \times X \to X$ and $\cdot : \Omega \times K \to K$ be external operations. We say $\le_\Phi$ is compatible with $\odot$ if $x \le_\Phi y \implies \lambda \odot x \le_\Phi \lambda \odot y$ for all $\lambda \in \Omega$. 

\begin{theorem}
\label{LM_algebraic_compatibility_2}Let the external operation on $K$ preserve the order (i.e., $u \le v \implies \lambda \cdot u \le \lambda \cdot v$). If the mapping $\Phi$ preserves the external operation such that $\Phi(\lambda \odot x) \equiv \lambda \cdot_{\operatorname{seq}} \Phi(x)$ for all $x \in X$ and $\lambda \in \Omega$, then the $\Phi$-sequential binary relation $\le_\Phi$ is compatible with the external operation $\odot$ on $X$.
\end{theorem}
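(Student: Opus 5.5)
The plan is to mirror the proof of Theorem~\ref{th_algebraic_compatibility}: first prove a sequence-level lemma, then transport it to $X$ through $\Phi$. Define the term-wise external operation on $\operatorname{seq}(K)$ by $\lambda\cdot_{\operatorname{seq}}\mathfrak{a}=\{\lambda\cdot a_n\}_{n\in\N_0}$. The intermediate claim I will prove is this: for every $\lambda\in\Omega$ and all $\mathfrak{a},\mathfrak{b}\in\operatorname{seq}(K)$,
\[
\mathfrak{a}\sqsubseteq\mathfrak{b}\implies \lambda\cdot_{\operatorname{seq}}\mathfrak{a}\sqsubseteq\lambda\cdot_{\operatorname{seq}}\mathfrak{b}.
\]
Granting the claim, the theorem follows in one line. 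Suppose $x\le_\Phi y$, that is, $\Phi(x)\sqsubseteq\Phi(y)$. The claim gives $\lambda\cdot_{\operatorname{seq}}\Phi(x)\sqsubseteq\lambda\cdot_{\operatorname{seq}}\Phi(y)$. The hypothesis $\Phi(\lambda\odot x)\equiv\lambda\cdot_{\operatorname{seq}}\Phi(x)$, together with the fact that $\equiv$ is compatible with $\sqsubseteq$ (as already used in the proof of Theorem~\ref{th_algebraic_compatibility}), then gives $\Phi(\lambda\odot x)\sqsubseteq\Phi(\lambda\odot y)$, which is exactly $\lambda\odot x\le_\Phi\lambda\odot y$.

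For the sequence-level claim I follow the two cases of Definition~\ref{def:order_for_sequences}, as in Lemma~\ref{lemma_seq_compatibility}.

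\emph{Case $\mathfrak{a}\equiv\mathfrak{b}$.} Here $a_n\sim b_n$, so both $a_n\le b_n$ and $b_n\le a_n$ hold. Applying the monotonicity hypothesis in each direction gives $\lambda\cdot a_n\sim\lambda\cdot b_n$ for all $n$, hence $\lambda\cdot_{\operatorname{seq}}\mathfrak{a}\equiv\lambda\cdot_{\operatorname{seq}}\mathfrak{b}$. This case uses only the weak hypothesis exactly as stated.

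\emph{Case of a first divergence index $n_0$.} Here $a_n\sim b_n$ for $n<n_0$ and $a_{n_0}\prec b_{n_0}$. The same argument as in the first case gives $\lambda\cdot a_n\sim\lambda\cdot b_n$ for every $n<n_0$. At $n_0$ the hypothesis yields $\lambda\cdot a_{n_0}\le\lambda\cdot b_{n_0}$. If this inequality is strict, then $n_0$ is a divergence index for the scaled sequences in the right direction, and we are done.

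The main obstacle is this divergence index. Weak monotonicity alone permits $\lambda\cdot a_{n_0}\sim\lambda\cdot b_{n_0}$ even though $a_{n_0}\prec b_{n_0}$. In that event the comparison of the scaled sequences passes to later indices, where it is decided by whichever of $\lambda\cdot a_n$, $\lambda\cdot b_n$ is smaller. Since $\mathfrak{a}\sqsubseteq\mathfrak{b}$ constrains only index $n_0$, the later terms may be ordered in either direction. The step that closes the argument is therefore to read ``preserve the order'' in the sense used for the additive case: the operation must respect the preorder structure of $(K,\le)$, that is, its equivalence $\sim$ and its strict part $\prec$. This is the analogue of the strict translation invariance assumed in Lemma~\ref{lemma_seq_compatibility}, where $\sim$ and $\prec$ are required to be preserved separately.

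Under that reading, $\lambda\cdot a_{n_0}\prec\lambda\cdot b_{n_0}$, the index $n_0$ witnesses $\lambda\cdot_{\operatorname{seq}}\mathfrak{a}\sqsubseteq\lambda\cdot_{\operatorname{seq}}\mathfrak{b}$, and the proof finishes as described in the first paragraph. I would state this reading explicitly in the proof, since the weak hypothesis ($\le$ mapped to $\le$) by itself does not control the divergence index, as the previous paragraph shows.
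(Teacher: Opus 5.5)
Your two-step argument (a scalar version of Lemma~\ref{lemma_seq_compatibility}, then transport through $\Phi$) is the route the paper intends. The paper gives no proof of this theorem, only the remark that the additive case ``easily extends''. Your transport step is sound: $\mathfrak{a}\equiv\mathfrak{a}'$ gives $\sqsubseteq$ in both directions, and $\sqsubseteq$ is transitive.

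Your diagnosis at the divergence index is also correct, and it is more than an obstacle to this proof: the theorem is false under the printed hypothesis. Take $K=\mathbb{R}$ and a single scalar $\lambda$ with $\lambda\cdot u=\max\{u,0\}$, which is monotone. Let $X=\operatorname{seq}(\mathbb{R})$, let $\Phi$ be the identity, and set $\lambda\odot\mathfrak{a}=\lambda\cdot_{\operatorname{seq}}\mathfrak{a}$. For $\mathfrak{a}=\{-2,5,0,0,\ldots\}$ and $\mathfrak{b}=\{-1,3,0,0,\ldots\}$ we have $\mathfrak{a}\sqsubseteq\mathfrak{b}$. Yet $\lambda\odot\mathfrak{a}=\{0,5,0,\ldots\}$ and $\lambda\odot\mathfrak{b}=\{0,3,0,\ldots\}$, so $\lambda\odot\mathfrak{a}\not\sqsubseteq\lambda\odot\mathfrak{b}$.

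You should therefore present preservation of $\prec$ as a corrected hypothesis, not as a ``reading'' of the stated one. The parenthetical in the statement explicitly fixes the weak meaning. With that correction your proof is complete.

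One refinement. Requiring $\prec$ to be preserved for \emph{every} $\lambda$ excludes $\lambda=0$. The paper's later application to non-negative scalar multiplication on $\operatorname{int}(\mathbb{R})$ needs that case, since $0\cdot[a,b]=[0,0]$ is not strictly monotone. It is harmless, because that map is constant and the scaled sequences all coincide.

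The sharp per-scalar condition is that $u\mapsto\lambda\cdot u$ be monotone and either preserve $\prec$ or be constant modulo $\sim$. To see that nothing weaker works, suppose $u\prec v$ with $\lambda\cdot u\sim\lambda\cdot v$, and $\lambda\cdot p\prec\lambda\cdot q$. Comparing $\{u,q,q,\ldots\}$ with $\{v,p,p,\ldots\}$ (with $X=\operatorname{seq}(K)$ and $\Phi$ the identity) reproduces the counterexample above.
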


\section{Sequential orderings and the $\alpha$-order}\label{sec:alpha-order}

The $\alpha$-order relies on a sequence of membership degrees, weight vectors, and valuation functions ($v_{\vartheta}^{k}$ and $v_{\varphi_{\alpha}}^{k}$). Because this relation breaks ties sequentially, it is a perfect candidate for absorption into our unified sequential ordering framework. Here, we demonstrate that the complex admissible preorder defined on the class $\mathcal{C}$ (see \ref{def:class_C}) is, in fact, a specific instance of a $\Phi$-sequential order. In this scenario, the totally ordered base space $K$ is the set of real numbers $\mathbb{R}$ equipped with its standard total order.

\begin{theorem}
\label{H26 05 lem alpha-order sequential}
Given $n\in\mathbb{N}$, a weight vector $\omega=(\omega_{1},\omega_{2},\ldots,\omega_{n})\in W_{n}$, a membership degree vector $\alpha=(\alpha_{1},\alpha_{2},\ldots,\alpha_{n})\in(0,1]^{n}$ and
{an extended}
aggregation function $\mathcal{A}:\cup_{m\in\mathbb{N}}\mathbb{R}^{m}\rightarrow\mathbb{R}$, the $\alpha$-order $\overset{\alpha}{\leq}$ is exactly the $\Phi_{\alpha,\omega,\mathcal{A}}$-sequential binary relation on {the class $\mathcal{C}$} associated to the mapping $\Phi_{\alpha,\omega,\mathcal{A}}:\mathcal{C}\rightarrow \operatorname{seq}(\mathbb{R})$ defined, for each $A\in\mathcal{C}$, by:
\[
\Phi_{\alpha,\omega,\mathcal{A}}(A)(k)=\left\{
\begin{tabular}
[c]{ll}%
$v_{\vartheta}^{k}(A),$ & if $~k\in\{0,1,\ldots,n-1\},$\\[4mm]%
$v_{\varphi_{\alpha}}^{k-(n-1)}(A),$ & if $~k\in\{n,n+1,\ldots,2n-2\},$\\[3mm]%
$0,$ & if $~k\geq 2n-1.$%
\end{tabular}
\right.
\]
\end{theorem}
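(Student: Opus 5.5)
The plan is to unfold both relations and compare them clause by clause. Write $\Phi=\Phi_{\alpha,\omega,\mathcal{A}}$. The base space is $K=\mathbb{R}$ with its usual total order, so in Definition~\ref{def:order_for_sequences} the relation $\sim$ is equality and $\prec$ is $<$. By definition $\Phi(A)\sqsubseteq\Phi(B)$ holds iff either $\Phi(A)=\Phi(B)$ or the first index where the sequences differ carries a strict inequality. By Definition~\ref{H26 17 def alpha-order}, $A\overset{\alpha}{\leq}B$ iff $A\overset{\alpha}{=}B$ or $A\overset{\alpha}{<}B$. So it suffices to prove two equivalences:
\[
A\overset{\alpha}{=}B \iff \Phi(A)=\Phi(B),
\qquad\text{and}\qquad
A\overset{\alpha}{<}B \iff \text{the first index } k_0 \text{ with } \Phi(A)(k_0)\neq\Phi(B)(k_0) \text{ has } \Phi(A)(k_0)<\Phi(B)(k_0).
\]

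The key observation used throughout is that $v^0_{\vartheta}(A)=v^0_{\varphi_\alpha}(A)=\langle\omega,\mathcal{A}^A_\alpha\rangle$, as recorded after the definition of the $k$-valuation. This is why positions $n,\dots,2n-2$ of $\Phi$ only need to store $v^{1}_{\varphi_\alpha},\dots,v^{n-1}_{\varphi_\alpha}$: the value $v^0_{\varphi_\alpha}$ already sits at position $0$. Also, every entry with index $k\geq 2n-1$ is $0$ for every fuzzy set, so those positions never distinguish $A$ from $B$.

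For the first equivalence, $A\overset{\alpha}{=}B$ means $v^k_\vartheta(A)=v^k_\vartheta(B)$ and $v^k_{\varphi_\alpha}(A)=v^k_{\varphi_\alpha}(B)$ for all $k\in J^0_{n-1}$. Using the observation, this is exactly equality of positions $0,\dots,2n-2$ of $\Phi(A)$ and $\Phi(B)$. Equality of the remaining positions is automatic.

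For the second equivalence, I would use the expanded description of $\overset{\alpha}{<}$ given after Definition~\ref{def6.4}.
\begin{itemize}
\item In the first branch, the first $\vartheta$-valuation at which $A$ and $B$ differ is some $k_0\le n-1$, with $v^{k_0}_\vartheta(A)<v^{k_0}_\vartheta(B)$. This is precisely a first strict discrepancy of $\Phi$ at position $k_0\le n-1$.
\item In the second branch, all $\vartheta$-valuations coincide, so positions $0,\dots,n-1$ agree. In particular $v^0_{\varphi_\alpha}(A)=v^0_\vartheta(A)=v^0_\vartheta(B)=v^0_{\varphi_\alpha}(B)$. Hence the option ``$v^0_{\varphi_\alpha}(A)<v^0_{\varphi_\alpha}(B)$'' cannot occur, and the first $\varphi_\alpha$-discrepancy is at some $k_0\in J_{n-1}$. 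That is position $k_0+n-1\in\{n,\dots,2n-2\}$ of $\Phi$, with the strict inequality in the right direction.
\end{itemize}
Conversely, suppose $\Phi(A)$ and $\Phi(B)$ first differ at index $m$ with $\Phi(A)(m)<\Phi(B)(m)$. The index $m$ cannot be $\geq 2n-1$, since those entries are all $0$. If $m\le n-1$, we land in the first branch of $\overset{\alpha}{<}$. If $n\le m\le 2n-2$, we land in the second branch with $k_0=m-(n-1)$.

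I expect the only delicate step to be this index bookkeeping. It consists of the shift $k\mapsto k-(n-1)$, the collapse of $v^0_{\varphi_\alpha}$ onto position $0$, and the vacuous zero tail. Once these are handled, the two equivalences hold, and combining them gives $\overset{\alpha}{\leq}\;=\;\lesssim_{\Phi}$.
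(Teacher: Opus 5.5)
Your proposal is correct and follows essentially the same route as the paper. It identifies $\alpha$-equality with equality of $\Phi(A)$ and $\Phi(B)$ via $v^0_{\vartheta}=v^0_{\varphi_\alpha}$, and it matches a first discrepancy in positions $0,\dots,n-1$ (resp.\ $n,\dots,2n-2$) with the first (resp.\ second) branch of $\overset{\alpha}{<}$. Your version is slightly more complete than the paper's. You check both directions explicitly, rule out a first discrepancy in the zero tail, and observe that the option $v^0_{\varphi_\alpha}(A)<v^0_{\varphi_\alpha}(B)$ cannot occur in the second branch; the paper leaves these points implicit.
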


\begin{proof}
By Definition \ref{def:seq_bin_rel}, the sequential binary relation is given by $A \le_{\Phi_{\alpha,\omega,\mathcal{A}}} B \iff \Phi_{\alpha,\omega,\mathcal{A}}(A) \sqsubseteq \Phi_{\alpha,\omega,\mathcal{A}}(B)$. We analyze the conditions of the total preorder $\sqsubseteq$ on $\operatorname{seq}(\mathbb{R})$:
\begin{itemize}
    \item Equivalence. $\Phi_{\alpha,\omega,\mathcal{A}}(A) = \Phi_{\alpha,\omega,\mathcal{A}}(B)$ if and only if all their sequence components are identical. This means $v_{\vartheta}^{k}(A) = v_{\vartheta}^{k}(B)$ for all $k \in \{0, \dots, n-1\}$, which exactly corresponds to $A\overset{\vartheta}{\underset{n-1}{=\joinrel=}}B$. It also means $v_{\varphi_{\alpha}}^{j}(A) = v_{\varphi_{\alpha}}^{j}(B)$ for all $j \in \{1, \dots, n-1\}$. Since $v_{\vartheta}^{0} = v_{\varphi_{\alpha}}^{0}$, this covers all components up to $n-1$, corresponding to $A\overset{\varphi_{\alpha}}{\underset{n-1}{=\joinrel=}}B$. Both conditions holding simultaneously is precisely the definition of the $\alpha$-equality, $A \overset{\alpha}{=} B$.
    \item Strict Order in the first half. Suppose the sequences differ at a minimum index $k_0 < n$. This implies $v_{\vartheta}^{k}(A) = v_{\vartheta}^{k}(B)$ for all $k < k_0$ and $v_{\vartheta}^{k_0}(A) < v_{\vartheta}^{k_0}(B)$. This is the exact definition of $A \overset{\vartheta}{<} B$. By definition of the $\alpha$-minor relation, this implies $A \overset{\alpha}{<} B$.
    \item Strict Order in the second half. Suppose the sequences differ at a minimum index $k_0 \in \{n, \dots, 2n-2\}$. This means the first $n$ elements (indices $0$ to $n-1$) are identical, so $A\overset{\vartheta}{\underset{n-1}{=\joinrel=}}B$. Let $j_0 = k_0 - (n-1)$. The difference at $k_0$ implies $v_{\varphi_{\alpha}}^{j}(A) = v_{\varphi_{\alpha}}^{j}(B)$ for all $j < j_0$ and $v_{\varphi_{\alpha}}^{j_0}(A) < v_{\varphi_{\alpha}}^{j_0}(B)$. This is exactly the definition of $A \overset{\varphi_{\alpha}}{<} B$. Combined with the $\vartheta$-equality, this gives $A \overset{\alpha}{<} B$. 
\end{itemize}

In all cases, the behavior of the sequential order $\sqsubseteq$ perfectly mirrors the piecewise lexicographic conditions of the $\alpha$-order. Therefore, $A \le_{\Phi_{\alpha,\omega,\mathcal{A}}} B \iff A \overset{\alpha}{\leq} B$.
\end{proof}

Furthermore, the intermediate binary relations $\overset{\vartheta}{\leq}$ and $\overset{\varphi_{\alpha}}{\leq}$, which evaluate only one of the weight distributions, can also be represented as sequential orders associated with the following truncated mappings:

\begin{itemize}
\item $\Phi_{\vartheta}:\mathcal{C}\rightarrow \operatorname{seq}(\mathbb{R})$ defined, for each $A\in\mathcal{C}$, by:
\[
\Phi_{\vartheta}(A)(k)=\left\{
\begin{tabular}
[c]{ll}%
$v_{\vartheta}^{k}(A),$ & if $~k\in\{0,1,\ldots,n-1\},$\\[3mm]%
$0,$ & if $~k\geq n.$%
\end{tabular}
\right.
\]

\item $\Phi_{\varphi_{\alpha}}:\mathcal{C}\rightarrow \operatorname{seq}(\mathbb{R})$ defined, for each $A\in\mathcal{C}$, by:
\[
\Phi_{\varphi_{\alpha}}(A)(k)=\left\{
\begin{tabular}
[c]{ll}%
$v_{\varphi_{\alpha}}^{k}(A),$ & if $~k\in\{0,1,\ldots,n-1\},$\\[3mm]%
$0,$ & if $~k\geq n.$%
\end{tabular}
\right.
\]
\end{itemize}

\section{Sequential ordering relations as a unified framework for existing orders for fuzzy numbers}\label{sec:unifying}

In this section, we study how the $\Phi$-sequential binary relation introduced before generalizes the main existing ranking approaches for fuzzy numbers. By appropriately selecting the totally preordered base space $(K, \le)$ and the mapping $\Phi: \operatorname{FN}(\mathbb{R}) \to \operatorname{seq}(K)$, we can recover and extend many of the standard ordering methods for fuzzy numbers found in the literature. 

For the majority classical methods, the target space $K$ is simply the set of real numbers $\mathbb{R}$ equipped with its standard total order $\le$. In this case, the equivalence relation $\sim$ is the standard equality $=$, and the strict relation $<$ is the standard strict inequality. We begin by showing that the sequential framework naturally embeds the simpler subfamilies of fuzzy numbers.

\begin{example}[Crisp fuzzy numbers]

Given $r \in \mathbb{R}$, we denote by $\tilde{r}$ to the real function that associates $1$ to $r$, and $0$ to any other real number. Then $\tilde{r}$ is a fuzzy number, known as {crisp} fuzzy number. We denote by $\widetilde{\mathbb{R}}$ to the family of all crisp fuzzy numbers (because it is biyective to $\mathbb{R}$). Define the mapping $\Phi_{crisp}: \widetilde{\mathbb{R}} \to \operatorname{seq}(\mathbb{R})$ as:
\[ \Phi_{crisp}(\tilde{r}) = \{r, 0, 0, \dots\}. \]
Let $\tilde{r_1}, \tilde{r_2} \in \widetilde{\mathbb{R}}$. Applying Definition \ref{def:seq_bin_rel}, we have $\tilde{r_1} \le_{\Phi_{crisp}} \tilde{r_2}$ if and only if $\Phi_{crisp}(\tilde{r_1}) \sqsubseteq \Phi_{crisp}(\tilde{r_2})$. By the definition of $\sqsubseteq$ on sequences, this occurs if and only if either the sequences are identical ($r_1 = r_2$) or they differ at the first index ($r_1 < r_2$). Therefore, $\tilde{r_1} \le_{\Phi_{crisp}} \tilde{r_2} \iff r_1 \le r_2$, which perfectly recovers the standard order of the real line.
\end{example}
The framework can also be adapted to compare families of fuzzy numbers defined by a fixed number of parameters, such as the family of trapezoidal fuzzy numbers $\operatorname{TrFN}(\mathbb{R})$. A trapezoidal fuzzy number $A$ can be represented by a 4-tuple $A = (a, b, c, d)$ with $a \le b \le c \le d$ \cite{Centroid_Admis}.
\begin{proposition}[Trapezoidal fuzzy numbers]
We can define a sequential order on $\operatorname{TrFN}(\mathbb{R})$ via the mapping $\Phi_{trap}: \operatorname{TrFN}(\mathbb{R}) \to \operatorname{seq}(\mathbb{R})$ defined as:
\[ \Phi_{trap}(a, b, c, d) = \{a, b, c, d, 0, 0, \dots\}. \]

\end{proposition}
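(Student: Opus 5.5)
The proposition asserts that $\Phi_{trap}$ induces a sequential order on $\operatorname{TrFN}(\mathbb{R})$. Read with the full strength the paper intends, this means that $\le_{\Phi_{trap}}$ is a total order, and moreover that it is exactly the lexicographic order on the 4-tuples $(a,b,c,d)$. The plan is to reduce the statement to Corollary \ref{coro:Seq_Order}.

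First, I note that $\Phi_{trap}$ is well defined. Each trapezoidal fuzzy number is identified with a unique 4-tuple $(a,b,c,d)$ with $a\le b\le c\le d$, since its $\alpha$-cuts are $[a+\alpha(b-a),\, d-\alpha(d-c)]$ and these determine the parameters. Therefore $\Phi_{trap}(A)$ is an unambiguous element of $\operatorname{seq}(\mathbb{R})$. By Theorem \ref{th:Seq_Preorder}, with $K=\mathbb{R}$ under its usual total order, $\le_{\Phi_{trap}}$ is already a total preorder.

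Second, I prove that $\Phi_{trap}$ is injective. If $\Phi_{trap}(A)=\Phi_{trap}(B)$, the first four terms coincide, so $A$ and $B$ have the same 4-tuple. By the identification above, $A=B$. Since $(\mathbb{R},\le)$ is a total order, Corollary \ref{coro:Seq_Order} then yields that $\le_{\Phi_{trap}}$ is a total order on $\operatorname{TrFN}(\mathbb{R})$.

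Third, I make the order explicit. Because all terms of index $\ge 4$ equal $0$ for every element, any difference between two images must occur at an index $n_0\in\{0,1,2,3\}$. Unwinding Definition \ref{def:order_for_sequences} gives
\[
(a,b,c,d)\le_{\Phi_{trap}}(a',b',c',d') \iff a<a' \text{ or } (a=a',\, b<b') \text{ or } (a=a',\, b=b',\, c<c') \text{ or } (a=a',\,b=b',\,c=c',\, d\le d').
\]
This is precisely the lexicographic order on $\mathbb{R}^4$ restricted to admissible tuples.

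The only genuine obstacle is the injectivity step, which rests on the uniqueness of the 4-tuple representation. This uniqueness is immediate from the $\alpha$-cut formula evaluated at $\alpha=0$ and $\alpha=1$, which recovers $[a,d]$ and $[b,c]$ respectively. If one also wanted admissibility with respect to $\le_{KY}$, that would not follow directly, and I would not claim it. Here, for instance, $(0,0,1,1)$ precedes $(0,0,2,2)$ consistently, but comparison is decided by $a$ alone, even though the Klir-Yuan order depends on all four parameters.
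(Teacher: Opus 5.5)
Your argument is correct and follows the route the paper intends. The paper states this proposition without a proof, but for the analogous parametric case of finite fuzzy numbers it argues exactly as you do: a total preorder from Theorem~\ref{th:Seq_Preorder}, injectivity of the parameter map, and then Corollary~\ref{coro:Seq_Order}.

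One correction to your closing aside: admissibility does hold. The cut endpoints $(1-\alpha)a+\alpha b$ and $(1-\alpha)d+\alpha c$ are affine in $\alpha$, so on trapezoids $A\le_{KY}B$ is equivalent to $a\le a'$, $b\le b'$, $c\le c'$, $d\le d'$, and the lexicographic order refines this componentwise order. Also, in your example $(0,0,1,1)$ versus $(0,0,2,2)$, the comparison is decided by $c$, not by $a$.
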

This sequential order prioritizes the left-most point of the support, breaking ties with the core boundaries, and finally the right-most point of the support. By permuting the parameters in the sequence or using linear combinations (such as the expected value), we can immediately generate a vast family of total orders tailored to specific decision-making contexts.

A vast portion of the literature on ranking fuzzy numbers relies on ranking indices (or defuzzification functions). A ranking index is a mapping $I: \operatorname{FN}(\mathbb{R}) \to \mathbb{R}$, which induces a total preorder $\le_I$ on $\operatorname{FN}(\mathbb{R})$ defined by $A \le_I B \iff I(A) \le I(B)$. The centroid approach discussed earlier is an example of a ranking index \cite{Centroid_Admis}.

\begin{proposition}
Each total preorder defined by a ranking index $I: \operatorname{FN}(\mathbb{R}) \to \mathbb{R}$ is a sequential preorder.
\end{proposition}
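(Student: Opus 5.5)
The plan is to exhibit an explicit mapping into $\operatorname{seq}(\mathbb{R})$ whose first term is the ranking index and whose remaining terms are constant, and then check that the induced $\Phi$-sequential relation coincides with $\le_I$. Concretely, take $K=\mathbb{R}$ with its usual total order, so that $\sim$ is equality and $<$ is the usual strict inequality. Define $\Phi_I:\operatorname{FN}(\mathbb{R})\to\operatorname{seq}(\mathbb{R})$ by
\[ \Phi_I(A)=\{I(A),0,0,\dots\}, \]
exactly as was done for $\Phi_{crisp}$ in the example on crisp fuzzy numbers. By Theorem \ref{th:Seq_Preorder}, $\le_{\Phi_I}$ is a total preorder on $\operatorname{FN}(\mathbb{R})$. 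It therefore only remains to show that $\le_{\Phi_I}$ and $\le_I$ are the same relation.

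Let $A,B\in\operatorname{FN}(\mathbb{R})$. Since all terms with index $n\ge 1$ of $\Phi_I(A)$ and $\Phi_I(B)$ are equal to $0$, we can unfold Definition \ref{def:order_for_sequences} directly.

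First, condition 1 ($\Phi_I(A)\equiv\Phi_I(B)$) holds exactly when $I(A)=I(B)$.

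Second, condition 2 can only be witnessed at $n_0=0$, because for any $n_0\ge 1$ we would need $0<0$. So condition 2 holds exactly when $I(A)<I(B)$.

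Hence $A\le_{\Phi_I}B$ if and only if $I(A)=I(B)$ or $I(A)<I(B)$, that is, if and only if $I(A)\le I(B)$, which is $A\le_I B$. This proves that every index-induced preorder is a $\Phi$-sequential preorder.

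There is no real obstacle. The only point needing care is to confirm that the tail of constant zeros can never produce a strict inequality at an index $n_0\ge1$, so that comparison is decided entirely by the first term. It is also worth remarking that $\Phi_I$ need not be injective, so Corollary \ref{coro:Seq_Order} does not apply. This is consistent with $\le_I$ generally being only a preorder, for instance when $I$ is the centroid.
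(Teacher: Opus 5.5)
Your proof is correct and is the same as the paper's. It uses the same map $\Phi_I(A)=\{I(A),0,0,\dots\}$ and the same unfolding of Definition \ref{def:order_for_sequences}, and your explicit check that the zero tail can never witness condition 2 at $n_0\ge 1$ states a step the paper leaves implicit.
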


\begin{proof}
Let $I: \operatorname{FN}(\mathbb{R}) \to \mathbb{R}$ be a ranking index. Define the mapping $\Phi_I: \operatorname{FN}(\mathbb{R}) \to \operatorname{seq}(\mathbb{R})$ by:
\[ \Phi_I(A) = \{I(A), 0, 0, \dots\} \quad \text{for all } A \in \operatorname{FN}(\mathbb{R}). \]
Let $A, B \in \operatorname{FN}(\mathbb{R})$. According to the definition of the sequential binary relation, $A \le_{\Phi_I} B \iff \Phi_I(A) \sqsubseteq \Phi_I(B)$.
Since the base set is $\mathbb{R}$, $\Phi_I(A) \sqsubseteq \Phi_I(B)$ holds if and only if either: $\Phi_I(A) = \Phi_I(B)$, which implies $I(A) = I(B)$; or the sequences differ at the first index ($n_0 = 0$), meaning $I(A) < I(B)$.

These two conditions are jointly equivalent to $I(A) \le I(B)$. Therefore, $A \le_{\Phi_I} B \iff A \le_I B$, proving that $\le_I$ is the $\Phi_I$-sequential preorder.
\end{proof}

Since ranking indices often map distinct fuzzy numbers to the same real value (lacking antisymmetry), they are merely total preorders. To break ties and increase the number of equivalence classes, researchers frequently {lexicographically} chain multiple indices. The sequential framework natively absorbs this technique for the construction of total preorders in any subfamily $\mathcal{S}$ of fuzzy numbers.

\begin{proposition}
Let $I_0, I_1, \dots, I_m$ be a finite family of ranking indices on a class $\mathcal{S}\subseteq\operatorname{FN}(\mathbb{R})$. The lexicographic preorder generated by sequentially applying these indices to break ties is a sequential preorder on $\mathcal{S}$.
\end{proposition}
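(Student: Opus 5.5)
The plan is to make the lexicographic chaining explicit and then copy the template of the proof for a single ranking index. First, I will fix the definition of the lexicographic preorder $\le_{lex}$ generated by $I_0,\dots,I_m$. For $A,B\in\mathcal{S}$, set $A\le_{lex}B$ if and only if one of the following holds:
\begin{itemize}
\item $I_k(A)=I_k(B)$ for every $k\in J_m^0$;
\item there is $k_0\in J_m^0$ such that $I_k(A)=I_k(B)$ for all $k<k_0$ and $I_{k_0}(A)<I_{k_0}(B)$.
\end{itemize}
This is exactly what "sequentially applying the indices to break ties" means. Next, I define $\Phi:\mathcal{S}\to\operatorname{seq}(\mathbb{R})$ by
\[
\Phi(A)=\{I_0(A),I_1(A),\dots,I_m(A),0,0,\dots\},
\]
that is, $\Phi(A)(k)=I_k(A)$ for $k\le m$ and $\Phi(A)(k)=0$ for $k>m$. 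The base space is $(\mathbb{R},\le)$, so $\sim$ is equality and $\prec$ is $<$.

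The core step is to check that $A\lesssim_\Phi B$ holds if and only if $A\le_{lex}B$, using Definition \ref{def:order_for_sequences}.

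Condition 1 of that definition says $\Phi(A)=\Phi(B)$. Since all entries with index $k>m$ are $0$ for every element of $\mathcal{S}$, this is equivalent to $I_k(A)=I_k(B)$ for all $k\le m$, which is the first clause of $\le_{lex}$.

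Condition 2 gives an index $n_0$ with equal entries before $n_0$ and $\Phi(A)(n_0)<\Phi(B)(n_0)$. Such an index must satisfy $n_0\le m$, because for $n_0>m$ both entries equal $0$ and cannot be strictly ordered. With $n_0\le m$, condition 2 is verbatim the second clause of $\le_{lex}$ with $k_0=n_0$. The converse direction reads the same equivalences backwards.

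The only point requiring care is this observation that the zero tail can never produce a strict inequality, so the infinite sequence order faithfully reproduces the finite lexicographic chain. Once the equivalence is established, Theorem \ref{th:Seq_Preorder} also shows that $\le_{lex}$ is a total preorder on $\mathcal{S}$. In fact any constant tail works; $0$ is just a convenient choice.
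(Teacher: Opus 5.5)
Your proof is correct and is essentially the paper's argument. The paper gives no separate proof of this proposition, but its proofs for a single index (via $\Phi_I(A)=\{I(A),0,0,\dots\}$) and for a countable family (Proposition~\ref{prop:index_preorder}) use exactly your construction, namely the indices in order followed by a constant zero tail, checked term by term against Definition~\ref{def:order_for_sequences}, with your observation that the tail can never produce a strict inequality being the one detail the paper leaves implicit.
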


Constructing a total order that is admissible (i.e., it refines the Klir-Yuan partial order $\le_{KY}$) is a complex task because it requires preserving the natural bounds of the fuzzy numbers while breaking ties across an uncountably infinite number of $\alpha$-levels. The sequential framework provides an elegant algebraic method to generate these admissible orders by utilizing a dense sequence of levels and a base space of intervals ($K=\operatorname{int}(\mathbb{R})$).

\begin{theorem}\label{th:seq_admis}
Let $\le_{\operatorname{int}}$ be an admissible total order on $\operatorname{int}(\mathbb{R})$ (meaning it refines the Kulisch-Miranker partial order $\le_{KM}$), and let $\{\alpha_n\}_{n \in \mathbb{N}_0}$ be an  upper dense sequence in $(0,1]$. Define the mapping $\Phi: \operatorname{FN}(\mathbb{R}) \to \operatorname{seq}(\operatorname{int}(\mathbb{R}))$ by:
\[ \Phi_{adm}(A) = \{A_{\alpha_0}, A_{\alpha_1}, A_{\alpha_2}, \dots\} \quad \text{for all } A \in \operatorname{FN}(\mathbb{R}), \]
where $A_{\alpha_n}$ is the $\alpha_n$-cut of $A$. Then, the $\Phi_{adm}$-sequential binary relation $\le_{\Phi_{adm}}$ is an admissible order on $\operatorname{FN}(\mathbb{R})$.
\end{theorem}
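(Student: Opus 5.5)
The plan is to split the claim into two parts: $\le_{\Phi_{adm}}$ is a total order, and it refines $\le_{KY}$.

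\textbf{Total order.} This comes from Corollary \ref{coro:Seq_Order} with $K=\operatorname{int}(\mathbb{R})$ and $\preceq=\le_{\operatorname{int}}$. Since $\le_{\operatorname{int}}$ is a total order, it only remains to check that $\Phi_{adm}$ is injective. Take $A\neq B$ in $\operatorname{FN}(\mathbb{R})$. The fact recalled after Definition \ref{H26 12 def upper dense sequence} gives an index $i$ with $A_{\alpha_i}\neq B_{\alpha_i}$, so $\Phi_{adm}(A)\neq\Phi_{adm}(B)$. If one wants to justify that fact directly, the route is as follows. Two fuzzy numbers coincide iff all their $\alpha$-cuts coincide for $\alpha\in(0,1]$. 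The endpoint maps $\alpha\mapsto\underline{A}_{\,\alpha}$ and $\alpha\mapsto\overline{A}_\alpha$ are left-continuous on $(0,1]$, because $A_\alpha=\bigcap_{\beta<\alpha}A_\beta$. Given any $\beta$, upper density supplies $\alpha_{m_k}\downarrow\beta$ with $\alpha_{m_k}\ge\beta$. This is the point that needs care: approaching from above with left-continuous endpoints does not give convergence immediately. So I would argue by contradiction instead. If $A_{\alpha_i}=B_{\alpha_i}$ for all $i$, then for each $\beta$ we have $A_\beta=\bigcup_{\alpha>\beta}A_\alpha$ up to closure. Equivalently, the functions $A$ and $B$ agree, since $A(x)=\sup\{\alpha_i: x\in A_{\alpha_i}\}$ by upper density. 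That last identity is clean and is the argument I would use.

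\textbf{Admissibility.} Assume $A\le_{KY}B$. If $A=B$, reflexivity gives the result. Otherwise $\Phi_{adm}(A)\neq\Phi_{adm}(B)$, so there is a minimal index $n_0$ with $A_{\alpha_{n_0}}\neq B_{\alpha_{n_0}}$. By minimality, the earlier terms are equal, which is the relation $\sim$ since $K$ is ordered. From $A\le_{KY}B$ we get $\underline{A}_{\,\alpha_{n_0}}\le\underline{B}_{\,\alpha_{n_0}}$ and $\overline{A}_{\alpha_{n_0}}\le\overline{B}_{\alpha_{n_0}}$, that is, $A_{\alpha_{n_0}}\le_{KM}B_{\alpha_{n_0}}$. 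Admissibility of $\le_{\operatorname{int}}$ then gives $A_{\alpha_{n_0}}\le_{\operatorname{int}}B_{\alpha_{n_0}}$, and this is strict because the two cuts differ and $\le_{\operatorname{int}}$ is antisymmetric. Condition 2 of Definition \ref{def:order_for_sequences} therefore holds, so $\Phi_{adm}(A)\sqsubseteq\Phi_{adm}(B)$, i.e.\ $A\le_{\Phi_{adm}}B$.

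The only real obstacle is injectivity, specifically the step of recovering the membership function from the cuts at the dense levels. I would cite the fact quoted from \cite{WANG2014131} and \cite{zumelzu2022}, and back it up with the $\sup$ formula above. As an alternative cross-check, $\le_{\Phi_{adm}}$ coincides with $\trianglelefteq$ of Definition \ref{H26 15 def binary relation trianglelefteq}, since both compare at the first level where the cuts differ. So Theorem \ref{H26 16 th trianglelefteq admissible order} also yields the result directly, up to the indexing of the sequence from $0$ rather than $1$.
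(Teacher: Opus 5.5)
Your proof is correct and follows the same route as the paper. For the total order, you show that $\Phi_{adm}$ is injective and apply Corollary \ref{coro:Seq_Order}; for admissibility, you take the first index where the cuts differ and combine $\le_{KM}\Rightarrow\le_{\operatorname{int}}$ with antisymmetry to get strictness.

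Your identity $A(x)=\sup\{\alpha_i : x\in A_{\alpha_i}\}$ actually proves the injectivity step that the paper only asserts. It needs levels approaching $A(x)$ from below, which upper density supplies if you apply it at $\beta=A(x)-\varepsilon$. You should drop the intermediate claim that $A_\beta$ equals the closure of $\bigcup_{\alpha>\beta}A_\alpha$, since it fails whenever $A$ has a plateau at height $\beta$.
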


\begin{proof}
First, we prove that $\le_{\Phi_{adm}}$ is a total order. Because $\le_{\operatorname{int}}$ is a total order (it is antisymmetric), Corollary \ref{coro:Seq_Order} states that to prove $\le_{\Phi_{adm}}$ is a total order on $\operatorname{FN}(\mathbb{R})$, we only need to show that $\Phi_{adm}$ is injective. Let $A, B \in \operatorname{FN}(\mathbb{R})$ such that $\Phi_{adm}(A) = \Phi_{adm}(B)$. This implies $A_{\alpha_n} = B_{\alpha_n}$ for all $n \in \mathbb{N}_0$. Since the sequence $\{\alpha_n\}_{n \in \mathbb{N}_0}$ is upper dense in $(0,1]$, $A_{\alpha_n} = B_{\alpha_n}\pt n\in\N_0$, which implies that $A_\alpha = B_\alpha$ for all $\alpha \in [0,1]$. Thus, $A = B$, proving that $\Phi_{adm}$ is injective and, consequently, $\le_{\Phi_{adm}}$ is a total order.

Next, we prove {the} admissibility. Let $A, B \in \operatorname{FN}(\mathbb{R})$ be such that $A \le_{KY} B$. 
If $A = B$, then trivially $A \le_{\Phi_{adm}} B$ by reflexivity. 
Assume $A \neq B$. By the definition of the Klir-Yuan order, $A \le_{KY} B$ implies that for every $\alpha \in [0,1]$, the intervals satisfy $A_\alpha \le_{KM} B_\alpha$. Because $\le_{\operatorname{int}}$ is an admissible interval order, it refines $\le_{KM}$, which implies $A_\alpha \le_{\operatorname{int}} B_\alpha$ for all $\alpha \in [0,1]$. Since $A \neq B$ and $\Phi_{adm}$ is injective, the sequences $\Phi_{adm}(A)$ and $\Phi_{adm}(B)$ are not strictly equal. Let $n_0 \in \mathbb{N}_0$ be the minimum index such that $A_{\alpha_{n_0}} \neq B_{\alpha_{n_0}}$. 
For all $n < n_0$, we have $A_{\alpha_n} = B_{\alpha_n}$. For the index $n_0$, because we established $A_{\alpha_{n_0}} \le_{\operatorname{int}} B_{\alpha_{n_0}}$ and they are not equal, it must be strictly true that $A_{\alpha_{n_0}} <_{\operatorname{int}} B_{\alpha_{n_0}}$. By the definition of the sequential order $\sqsubseteq$ on $\operatorname{seq}(\operatorname{int}(\mathbb{R}))$, this condition exactly means $\Phi_{adm}(A) \sqsubseteq \Phi_{adm}(B)$. Therefore, $A \le_{\Phi_{adm}} B$. Since $A \le_{KY} B \implies A \le_{\Phi_{adm}} B$, the sequential order $\le_{\Phi_{adm}}$ is an admissible order on $\operatorname{FN}(\mathbb{R})$.
\end{proof}

We recall here that the abstract generalization given in Section \ref{sec:algebraic} provides a powerful tool for concrete applications. For instance, let $X = \operatorname{FN}(\mathbb{R})$ be the set of fuzzy numbers equipped with standard fuzzy addition $\oplus$ and non-negative scalar multiplication $\odot$ via Zadeh's extension principle. Let $K = \operatorname*{int}(\mathbb{R})$ with standard interval arithmetic and an admissible interval order. The admissible mapping $\Phi(A) = \{A_{\alpha_n}\}$ defined in Theorem \ref{th:seq_admis} strictly satisfies $\Phi(A \oplus B) = \Phi(A) +_{\operatorname*{seq}} \Phi(B)$ and $\Phi(\lambda \odot A) = \lambda \cdot_{\operatorname*{seq}} \Phi(A)$. Thus, by Theorem \ref{th_algebraic_compatibility} and its {homologous \ref{LM_algebraic_compatibility_2}}, the admissible order $\le_{\Phi_{adm}}$ is compatible with fuzzy arithmetic as long as the underlying interval order is translation invariant.

It should be remarked that the result of ranking fuzzy numbers directly depends on the used methodology. The main objective of the following example is to show how simple it is to consider total sequential preorders (and orders) when ranking fuzzy numbers, easily computing the resulting comparisons that resolve some incomparabilities.

\begin{example}
     Let consider the two trapezoidal fuzzy numbers $A$ and $B$ defined by their corners as $A = (1, 4, 5, 8)$ and $B = (2, 3, 6, 7)$. Note that $A$ and $B$ are incomparable under the Klir-Yuan partial order ($\le_{KY}$). While the support of $A$ ($[1, 8]$) contains the support of $B$ ($[2, 7]$), the core of $B$ ($[3, 6]$) contains the core of $A$ ($[4, 5]$), as shown in Figure \ref{fig:comparison}.

\begin{figure}[h]
    \centering
    \begin{tikzpicture}[scale=0.9]
        \draw[->, thick] (-0.5,0) -- (9,0) node[right] {$x$};
        \draw[->, thick] (0,-0.2) -- (0,1.5) node[above] {};
        
        \draw[help lines, gray!20] (0,0) grid (8.5,1.2);

        \draw[blue, thick] (1,0) -- (4,1) -- (5,1) -- (8,0);
        \fill[blue, opacity=0.1] (1,0) -- (4,1) -- (5,1) -- (8,0) -- cycle;
        \node[blue, font=\bfseries] at (4.5,1.15) {$A$};
        
        \draw[red, thick, dashed] (2,0) -- (3,1) -- (6,1) -- (7,0);
        \fill[red, opacity=0.1] (2,0) -- (3,1) -- (6,1) -- (7,0) -- cycle;
        \node[red, font=\bfseries] at (4.5,0.6) {$B$};
        
        \foreach \x in {1,2,3,4,5,6,7,8}
            \draw (\x, 2pt) -- (\x, -2pt) node[below, font=\small] {$\x$};
    \end{tikzpicture}
    \caption{Comparison of $A=(1,4,5,8)$ and $B=(2,3,6,7)$.}
    \label{fig:comparison}
\end{figure}

Let us {overcome this incomparability by employing two distinct sequential approaches}. First, we utilize an admissible order based on $\alpha$-cuts. Concretely, we will utilize the Lexicographical 2 interval order (which prioritizes the upper bound) to compare the supports of the trapezoidal fuzzy numbers and, if tied, we will compare the cores. Since $A_0 = [1, 8] > B_0 = [2, 7]$, this method yields {$B < A$}. Next, we will configure a sequential mapping based on four indices: (i) $I_0$ is the arithmetic mean of the vertices, (ii) $I_1$ is the upper bound of the core ($\alpha=1$), (iii) $I_2$ is the lower bound of the core, and (iv) $I_3$ is the lower bound of the support. Since the arithmetic mean of the vertices of both fuzzy numbers match, the first index $I_0$ does not discriminate the fuzzy numbers, and we need to check the second one. According to $I_1$, since $I_1(A)=5<I_1(B)=6$, then, the sequential order determines \textbf{$A < B$}.
\end{example}

\section{New ranking methods based on Sequential orderings}\label{sec:new}

While the preceding sections established the theoretical robustness and algebraic compatibility of the sequential framework, its true mathematical value lies in its generative capacity. Classical decision-making models are frequently hindered by the limitations of defuzzifications, leading to information loss and massive equivalence classes. By defining the sequence space and the mapping $\Phi$, the sequential binary relation overcomes these limitations, allowing us to construct entirely novel, highly discriminative ranking methods. In this section, we leverage this generative property to develop new ranking algorithms. We extend the framework to handle infinite countable families of indices, construct strict, computationally efficient total orders for parametric finite fuzzy numbers, and address the multidimensional uncertainty of IT2FN.

\subsection{Sequential orderings based on countable ranking indexes}

Classical ranking methods for fuzzy numbers typically rely on a single ranking index (such as the expected value, the area under the curve, or a specific distance metric). The fundamental limitation of single-index methods is the inevitable loss of information, as many distinct fuzzy numbers are mapped to the same real value, yielding a total preorder with very wide equivalence classes.

While some authors have proposed using a finite vector of indices to break ties \cite{Neres2023}, the generalized sequential framework allows us to push this concept to its analytical limit by employing a countably infinite family of ranking indices.

\begin{proposition}\label{prop:index_preorder}
	Let ${\mathcal{I}=}\{I_{n}\}_{n\in\mathbb{N}_{0}}$ be a countable family
	of ranking indices on $\operatorname{FN}(\mathbb{R})$. The lexicographic
	preorder generated by sequentially applying these indices to break ties, 
	{ given as the sequential order determined by the mapping $\Phi_{\mathcal{I}}:\operatorname{FN}(\mathbb{R}%
	)\rightarrow\operatorname{seq}(\mathbb{R})$ defined as:
	\[
	\Phi_{\mathcal{I}}(A)=\{I_{0}(A),I_{1}(A),\dots,I_{m}(A),\dots\},
	\]
	}
	is a sequential preorder.
\end{proposition}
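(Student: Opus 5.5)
The plan is to recognise the statement as a direct instance of Theorem \ref{th:Seq_Preorder}, and then to check that the resulting relation really is the lexicographic tie-breaking procedure described in words. I take $X=\operatorname{FN}(\mathbb{R})$ and the base space $K=\mathbb{R}$ with its usual total order. Since each $I_n$ maps $\operatorname{FN}(\mathbb{R})$ into $\mathbb{R}$, the assignment $A\mapsto \Phi_{\mathcal{I}}(A)=\{I_n(A)\}_{n\in\mathbb{N}_0}$ is a well-defined mapping $\operatorname{FN}(\mathbb{R})\to\operatorname{seq}(\mathbb{R})$. Theorem \ref{th:Seq_Preorder} then gives immediately that $\le_{\Phi_{\mathcal{I}}}$ is a total preorder on $\operatorname{FN}(\mathbb{R})$, and by Definition \ref{def:seq_bin_rel} it is a sequential preorder.

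The second step is to confirm that $\le_{\Phi_{\mathcal{I}}}$ coincides with the informal relation ``compare by $I_0$, break ties with $I_1$, then $I_2$, and so on''. I will unfold Definition \ref{def:order_for_sequences} with $\sim$ equal to $=$ and $\prec$ equal to $<$ on $\mathbb{R}$. We have $A\le_{\Phi_{\mathcal{I}}}B$ exactly when one of two things holds. Either $I_n(A)=I_n(B)$ for every $n$, meaning the two numbers are tied by every index. Or there is a first index $n_0$ with $I_n(A)=I_n(B)$ for all $n<n_0$ and $I_{n_0}(A)<I_{n_0}(B)$, meaning the first index that distinguishes them ranks $A$ below $B$. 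This is precisely the verbal description, so the two relations agree.

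I do not expect a genuine obstacle; the argument is essentially a specialisation. The only point needing care is the infinite family. In the finite-family proposition, one can pad the sequence with zeros after $I_m$. Here the sequence is genuinely infinite, so the existence of a first index of disagreement must rest on the well-ordering of $\mathbb{N}_0$. This is exactly what the totality argument in the proof of Theorem \ref{th:preorder_sucesiones} already uses, so no new ingredient is needed.

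Finally, I would remark that antisymmetry is not claimed, and it generally fails. By Corollary \ref{coro:Seq_Order}, $\le_{\Phi_{\mathcal{I}}}$ is a total order precisely when the family $\mathcal{I}$ separates points, i.e. when $\Phi_{\mathcal{I}}$ is injective.
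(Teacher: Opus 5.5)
Your proposal is correct and matches the paper's proof, which likewise unfolds Definition~\ref{def:order_for_sequences} with $K=\mathbb{R}$ to show that $\le_{\Phi_{\mathcal{I}}}$ is exactly the lexicographic tie-breaking relation; you make explicit the appeal to Theorem~\ref{th:Seq_Preorder} that the paper leaves implicit. One small caveat on your closing remark: Corollary~\ref{coro:Seq_Order} only supplies the ``if'' direction of ``total order precisely when $\Phi_{\mathcal{I}}$ is injective,'' and the converse is a separate (easy) check, since $\Phi_{\mathcal{I}}(A)=\Phi_{\mathcal{I}}(B)$ gives both $A\le_{\Phi_{\mathcal{I}}}B$ and $B\le_{\Phi_{\mathcal{I}}}A$.
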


\begin{proof}
	Let $A,B\in\operatorname{FN}(\mathbb{R})$. If we assume $A\leq_{\Phi
		_{\mathcal{I}}}B$, then $\Phi_{\mathcal{I}}(A)\sqsubseteq\Phi_{\mathcal{I}%
	}(B)$. By the definition of $\sqsubseteq$, either $\Phi_{\mathcal{I}}%
	(A)=\Phi_{\mathcal{I}}(B)$ (meaning all indices yield identical values for $A$
	and $B$), or there is a minimum index $k\in\mathbb{N}_{0}$ such that
	$I_{n}(A)=I_{n}(B)$ for all $n<k$ and $I_{k}(A)<I_{k}(B)$. This is precisely
	the formal definition of a lexicographic tie-breaking order applied to the
	countable set of ranking indices.
\end{proof}

The following result enables the generation of admissible preorders \cite{Centroid_Admis}.

\begin{theorem} \label{theor:index_admissible_preorder}
Let $\mathcal{I} = \{I_n\}_{n \in \mathbb{N}_0}$ be a countable sequence of ranking indices such that each $I_n: \operatorname{FN}(\mathbb{R}) \to \mathbb{R}$ is increasing on the $\alpha$-cuts, i.e., for any $A, B \in \operatorname{FN}(\mathbb{R})$, $A \le_{KY} B \implies I_n(A) \le I_n(B)$ for all $n \in \mathbb{N}_0$. Then, the sequential total preorder $\le_{\Phi_{\mathcal{I}}}$ is an admissible total preorder.
\end{theorem}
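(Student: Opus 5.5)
The argument splits into two parts: total preorder, then admissibility.

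First, $\le_{\Phi_{\mathcal{I}}}$ is a total preorder on $\operatorname{FN}(\mathbb{R})$. Take $K=\mathbb{R}$ with its usual total order and $\Phi_{\mathcal{I}}$ as in Proposition \ref{prop:index_preorder}. Theorem \ref{th:Seq_Preorder} then gives reflexivity, transitivity and totality directly, so only admissibility remains.

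For admissibility, fix $A,B\in\operatorname{FN}(\mathbb{R})$ with $A\le_{KY}B$. By hypothesis, $I_n(A)\le I_n(B)$ for every $n\in\mathbb{N}_0$. We want $\Phi_{\mathcal{I}}(A)\sqsubseteq\Phi_{\mathcal{I}}(B)$, and we distinguish two cases according to Definition \ref{def:order_for_sequences}.

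\emph{Case 1.} If $I_n(A)=I_n(B)$ for all $n$, then $\Phi_{\mathcal{I}}(A)\equiv\Phi_{\mathcal{I}}(B)$, since in $\mathbb{R}$ the relation $\sim$ is equality. Condition 1 of Definition \ref{def:order_for_sequences} then gives $A\le_{\Phi_{\mathcal{I}}}B$.

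\emph{Case 2.} Otherwise, the set $\{n\in\mathbb{N}_0: I_n(A)\neq I_n(B)\}$ is non-empty and has a least element $n_0$, by the well-ordering of $\mathbb{N}_0$. For $n<n_0$ we have $I_n(A)=I_n(B)$. At $n_0$ we have both $I_{n_0}(A)\le I_{n_0}(B)$ and $I_{n_0}(A)\neq I_{n_0}(B)$, hence $I_{n_0}(A)<I_{n_0}(B)$. This is exactly condition 2 of Definition \ref{def:order_for_sequences}, so $\Phi_{\mathcal{I}}(A)\sqsubseteq\Phi_{\mathcal{I}}(B)$.

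In both cases $A\le_{KY}B$ implies $A\le_{\Phi_{\mathcal{I}}}B$. This is the required refinement property, so $\le_{\Phi_{\mathcal{I}}}$ is an admissible total preorder.

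There is no real obstacle. The only subtlety is that the first differing index must be used, not just any differing index; monotonicity at every index guarantees that this first difference goes in the right direction. This mirrors the admissibility part of the proof of Theorem \ref{th:seq_admis}, with the total order on $\operatorname{int}(\mathbb{R})$ replaced by $\le$ on $\mathbb{R}$ and no injectivity needed.
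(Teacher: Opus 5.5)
Your proof is correct and follows the same route as the paper: monotonicity of each $I_n$ gives $I_n(A)\le I_n(B)$ for every $n$, and this yields $\Phi_{\mathcal{I}}(A)\sqsubseteq\Phi_{\mathcal{I}}(B)$. The paper asserts that last implication in one line, whereas you justify it explicitly through the first index of disagreement, and you also cite Theorem \ref{th:Seq_Preorder} for the total-preorder part.
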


\begin{proof}
To prove that $\le_{\Phi_{\mathcal{I}}}$ is admissible, we must show that $A \le_{KY} B \implies A \le_{\Phi_{\mathcal{I}}} B$ for any $A, B \in \operatorname{FN}(\mathbb{R})$. 
Suppose $A \le_{KY} B$. By the hypothesis that each index $I_n$ is increasing on the $\alpha$-cuts, we have $I_n(A) \le I_n(B)$ for every $n \in \mathbb{N}_0$. 
This implies that the corresponding sequences in $seq(\mathbb{R})$ satisfy $\Phi_{\mathcal{I}}(A) \sqsubseteq \Phi_{\mathcal{I}}(B)$. 
By the definition of the sequential relation $\le_{\Phi_{\mathcal{I}}}$ given in Proposition \ref{prop:index_preorder}, this is equivalent to $A \le_{\Phi_{\mathcal{I}}} B$. Thus, the preorder is admissible.
\end{proof}

The true power of a countable sequence of indices lies in its potential to
completely characterize the fuzzy number, thereby transforming the total
preorder into a strict total order (an antisymmetric relation).

\begin{definition}
	A {countable} family of ranking indices
	$\mathcal{I}=\{I_{n}\}_{n\in\mathbb{N}_{0}}$ is said to be {separating} on a
	subfamily $\mathcal{S}\subseteq\operatorname{FN}(\mathbb{R})$ if, for any two
	{distinct} fuzzy numbers $A,B\in\mathcal{S}$,
	there exists at least $k\in\mathbb{N}_{0}$ such that
	$I_{k}(A)\neq I_{k}(B)$.
\end{definition}

\begin{theorem}\label{theor:index_total}
	If the {countable} family of ranking indices
	$\mathcal{I}=\{I_{n}\}_{n\in\mathbb{N}_{0}}$ is separating in $\mathcal{S}%
	\subseteq\operatorname{FN}(\mathbb{R})$, then the sequential mapping
	$\Phi_{\mathcal{I}}$ is injective and, consequently, the {countable }
	index-based sequential order $\leq_{\Phi_{\mathcal{I}}}$
	is a total order on $\operatorname{FN}(\mathbb{R})$.
\end{theorem}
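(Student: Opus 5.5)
The plan is to reduce the statement to Corollary \ref{coro:Seq_Order}, taking $K=\mathbb{R}$ with its usual total order and $X=\mathcal{S}$. The corollary requires three things: $X$ is non-empty, the base space is totally ordered, and $\Phi$ is injective. The only real work is therefore proving injectivity of $\Phi_{\mathcal{I}}$ restricted to $\mathcal{S}$.

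\textbf{Step 1 (injectivity).} Take $A,B\in\mathcal{S}$ with $\Phi_{\mathcal{I}}(A)=\Phi_{\mathcal{I}}(B)$. Equality in $\operatorname{seq}(\mathbb{R})$ means the sequences agree termwise, so $I_n(A)=I_n(B)$ for every $n\in\mathbb{N}_0$. Suppose $A\neq B$. The separating property then gives some $k\in\mathbb{N}_0$ with $I_k(A)\neq I_k(B)$, which is a contradiction. Hence $A=B$, and $\Phi_{\mathcal{I}}|_{\mathcal{S}}$ is injective.

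\textbf{Step 2 (total order).} Since $(\mathbb{R},\le)$ is a totally ordered set, the equivalence $\sim$ on $\mathbb{R}$ is plain equality. By Corollary \ref{coro:order_sucesiones}, $\sqsubseteq$ is a total order on $\operatorname{seq}(\mathbb{R})$. Corollary \ref{coro:Seq_Order}, applied with $X=\mathcal{S}$, then shows that $\le_{\Phi_{\mathcal{I}}}$ is a total order on $\mathcal{S}$. Reflexivity, transitivity and totality already follow from Theorem \ref{th:Seq_Preorder}, which the corollary invokes, and antisymmetry comes from the injectivity proved in Step 1.

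\textbf{Main obstacle.} The difficulty is not mathematical but a scoping issue in the statement. The hypothesis gives separation only on $\mathcal{S}$, while the conclusion asserts a total order on all of $\operatorname{FN}(\mathbb{R})$. On $\operatorname{FN}(\mathbb{R})\setminus\mathcal{S}$, two distinct fuzzy numbers could share every index value, and antisymmetry would fail there. I would therefore prove the result on $\mathcal{S}$ and read the conclusion as referring to $\mathcal{S}$. The statement on the whole of $\operatorname{FN}(\mathbb{R})$ holds exactly in the case $\mathcal{S}=\operatorname{FN}(\mathbb{R})$, and the argument above covers that case verbatim.
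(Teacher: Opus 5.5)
Your proof is correct and follows the paper's argument: you get injectivity of $\Phi_{\mathcal{I}}$ on $\mathcal{S}$ from the separating property, then apply Corollary \ref{coro:Seq_Order} with $K=\mathbb{R}$. You are also right to read the conclusion as a total order on $\mathcal{S}$ rather than on all of $\operatorname{FN}(\mathbb{R})$: the paper's own proof only takes $A,B\in\mathcal{S}$, and Theorem \ref{theo:admissible_order} later cites this result as giving a total order on $\mathcal{S}$.
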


\begin{proof}
	Let $A,B\in\mathcal{S}$ such that $A\equiv_{\Phi_{\mathcal{I}}}B$. This
	implies that $\Phi_{\mathcal{I}}(A)\equiv\Phi_{\mathcal{I}}(B)$, meaning
	$I_{n}(A)=I_{n}(B)$ for all $n\in\mathbb{N}_{0}$. Because $\{I_{n}%
	\}_{n\in\mathbb{N}_{0}}$ is a separating family, the condition $I_{n}%
	(A)=I_{n}(B)$ for all $n$ strictly requires that $A=B$. Thus, $\Phi
	_{\mathcal{I}}$ is injective. By Corollary \ref{coro:Seq_Order}, since
	$\Phi_{\mathcal{I}}$ injectively maps $\mathcal{S}$ into the totally ordered
	space $\operatorname{seq}(\mathbb{R})$, the induced relation $\leq
	_{\Phi_{\mathcal{I}}}$ is antisymmetric, making it a total
	 order.
\end{proof}

This formulation also behaves exceptionally well with the algebraic
compatibility theorems established before. If every ranking index $I_{n}$ is a
linear operator (i.e., $I_{n}(A\oplus B)=I_{n}(A)+I_{n}(B)$ and $I_{n}%
(\lambda\odot A)=\lambda I_{n}(A)$), then the entire mapping $\Phi
_{\mathcal{I}}$ is linear. Therefore, any total order constructed from a
separating sequence of linear ranking indices will automatically be compatible
with fuzzy addition and scalar multiplication. Furthermore, we can also obtain admissible orders utilizing ranking indices.

\begin{theorem} \label{theo:admissible_order}
Let $\mathcal{I} = \{I_n\}_{n \in \mathbb{N}_0}$ be a sequence of ranking indices that are increasing on the $\alpha$-cuts. Let $\mathcal{S} \subseteq \operatorname{FN}(\mathbb{R})$ be a subfamily of fuzzy numbers such that $\mathcal{I}$ is separating on $\mathcal{S}$. Then the sequential binary relation $\le_{\Phi_{\mathcal{I}}}$ is an admissible total order on $\mathcal{S}$.
\end{theorem}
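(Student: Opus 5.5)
The plan is to combine the two preceding results, Theorem \ref{theor:index_admissible_preorder} and Theorem \ref{theor:index_total}, both restricted to the subfamily $\mathcal{S}$. Totality, reflexivity and transitivity of $\le_{\Phi_{\mathcal{I}}}$ on $\mathcal{S}$ come for free from Theorem \ref{th:Seq_Preorder}, applied with $X=\mathcal{S}$, $K=\mathbb{R}$ and $\Phi_{\mathcal{I}}$ restricted to $\mathcal{S}$. So only two things need checking: antisymmetry, and refinement of $\le_{KY}$.

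\textbf{Antisymmetry.} I would argue exactly as in Theorem \ref{theor:index_total}. If $A,B\in\mathcal{S}$ satisfy $\Phi_{\mathcal{I}}(A)=\Phi_{\mathcal{I}}(B)$, then $I_n(A)=I_n(B)$ for every $n\in\mathbb{N}_0$. Since $\mathcal{I}$ is separating on $\mathcal{S}$, this forces $A=B$, so the restriction of $\Phi_{\mathcal{I}}$ to $\mathcal{S}$ is injective. Because $(\mathbb{R},\le)$ is totally ordered, Corollary \ref{coro:Seq_Order} then shows that $\le_{\Phi_{\mathcal{I}}}$ is a total order on $\mathcal{S}$.

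\textbf{Admissibility.} Let $A,B\in\mathcal{S}$ with $A\le_{KY}B$. If $A=B$, reflexivity settles it. Otherwise I will not rely on the sketchy step in Theorem \ref{theor:index_admissible_preorder}, which reads ``termwise $\le$ implies $\sqsubseteq$''. Instead I argue through the first disagreement index. By injectivity, $\Phi_{\mathcal{I}}(A)\neq\Phi_{\mathcal{I}}(B)$, so the index
\[
k=\min\{n\in\mathbb{N}_0 : I_n(A)\neq I_n(B)\}
\]
exists. Monotonicity of $I_k$ along $\le_{KY}$ gives $I_k(A)\le I_k(B)$. Combined with $I_k(A)\neq I_k(B)$, this yields $I_k(A)<I_k(B)$, while $I_n(A)=I_n(B)$ for all $n<k$. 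This is precisely condition 2 of Definition \ref{def:order_for_sequences}, so $\Phi_{\mathcal{I}}(A)\sqsubseteq\Phi_{\mathcal{I}}(B)$, that is, $A\le_{\Phi_{\mathcal{I}}}B$.

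The only delicate point is this minimal-index argument. It is what makes termwise monotonicity imply the lexicographic relation, and it is where the separating hypothesis is used a second time. Everything else is a direct appeal to the earlier results.
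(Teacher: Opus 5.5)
Your proof is correct and follows the paper's route. The total order comes from Theorem \ref{theor:index_total} (injectivity from separation plus Corollary \ref{coro:Seq_Order}), and admissibility comes from the termwise monotonicity of the indices; your minimal-index argument just spells out why termwise $\le$ yields $\sqsubseteq$. That step in the paper is terse rather than sketchy, and separation is not actually needed for admissibility: when $\Phi_{\mathcal{I}}(A)=\Phi_{\mathcal{I}}(B)$, condition 1 of Definition \ref{def:order_for_sequences} already gives $\Phi_{\mathcal{I}}(A)\sqsubseteq\Phi_{\mathcal{I}}(B)$.
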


\begin{proof}
From Theorem \ref{theor:index_total}, we know that if $\mathcal{I}$ is separating on $\mathcal{S}$, then $\le_{\Phi_{\mathcal{I}}}$ is a total order on $\mathcal{S}$. 
Furthermore, since the indices $I_n$ are increasing on the $\alpha$-cuts, the same argument used in the proof of Proposition \ref{theor:index_admissible_preorder} applies: 
For any $A, B \in \mathcal{S}$, $A \le_{KY} B$ implies $\Phi_{\mathcal{I}}(A) \sqsubseteq \Phi_{\mathcal{I}}(B)$, which means $A \le_{\Phi_{\mathcal{I}}} B$. 
Therefore, the relation $\le_{\Phi_{\mathcal{I}}}$ satisfies the conditions of an admissible order while maintaining the totality and antisymmetry guaranteed by Theorem \ref{theor:index_total}.
\end{proof}
\color{black}

\subsection{Sequential orderings for finite fuzzy numbers}

While fuzzy numbers given by continuous membership functions are prevalent in theoretical studies (triangular, trapezoidal, etc.), practical applications and computational algorithms heavily rely on discrete approximations. A prominent class of such approximations is the family of finite fuzzy numbers. In this section, we apply the sequential framework to this class, demonstrating how its parametric nature guarantees the existence of injective mappings and, consequently, strict total orders.

Following \cite{ROLDAN2014204}, a fuzzy number $A$ is {finite} if its image, $A(\mathbb{R})$, is a finite subset of $[0,1]$. In \cite{ALFONSO201747} it was shown that a fuzzy number $A$ is finite if and only if there is a partition $\Lambda = \{0 = \alpha_0 < \alpha_1 < \dots < \alpha_m = 1\}$ of the interval $[0,1]$ such that the $\alpha$-cuts of $A$ are common on each sub-interval $(\alpha_{i-1}, \alpha_i]$, i.e., $A_\alpha=A_\beta$ for all $\alpha,\beta\in(\alpha_{i-1}, \alpha_i]$ and all $ i \in \{1,...,m\}$. 
Let us denote by $\operatorname{FFN}_{\Lambda}(\mathbb{R})$ to the family of all finite fuzzy numbers associated to the partition $\Lambda$.


Because the $\alpha$-cuts are common over each sub-interval, any finite fuzzy number $A \in \operatorname{FFN}_{\Lambda}(\mathbb{R})$ is entirely and uniquely determined by the finite family of intervals corresponding to the partition levels: $\{A_{\alpha_1}, A_{\alpha_2}, \dots, A_{\alpha_m}\}$. Consequently, a finite fuzzy number is fundamentally a parametric fuzzy number characterized by $2m$ real numbers (the lower and upper bounds of these $m$ cuts).

This finite parametric representation is highly advantageous within our framework, as it allows for the construction of injective mappings.

\begin{proposition}
The mapping $\Phi_{param}: \operatorname{FFN}_{\Lambda}(\mathbb{R}) \to \operatorname{seq}(\mathbb{R})$ defined, for each $A \in \operatorname{FFN}_{\Lambda}(\mathbb{R})$ with $\alpha$-cuts $A_{\alpha_i}$ for $i \in \{ 1, \dots, m\}$, by:
\[ \Phi_{param}(A) = \{\underline{A}_{\,\alpha_1}, \overline{A}_{\alpha_1}, \underline{A}_{\,\alpha_2}, \overline{A}_{\alpha_2}, \dots, \underline{A}_{\,\alpha_m}, \overline{A}_{\alpha_m}, 0, 0, \dots\} \]
is injective. Consequently, the sequential order $\le_{\Phi}$ is a total  order on $\operatorname{FFN}_{\Lambda}(\mathbb{R})$.
\end{proposition}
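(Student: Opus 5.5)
The proof has two steps: first show that $\Phi_{param}$ is injective, then invoke Corollary \ref{coro:Seq_Order} with $K=\mathbb{R}$ under its usual total order. Since $(\mathbb{R},\le)$ is totally ordered, the corollary gives directly that $\le_{\Phi_{param}}$ is a total order on $\operatorname{FFN}_{\Lambda}(\mathbb{R})$ once injectivity is known. So the real work is the injectivity claim.

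For injectivity, take $A,B\in\operatorname{FFN}_{\Lambda}(\mathbb{R})$ with $\Phi_{param}(A)=\Phi_{param}(B)$. Comparing the first $2m$ terms gives $\underline{A}_{\,\alpha_i}=\underline{B}_{\,\alpha_i}$ and $\overline{A}_{\alpha_i}=\overline{B}_{\alpha_i}$ for every $i\in\{1,\dots,m\}$. Hence $A_{\alpha_i}=B_{\alpha_i}$ as intervals, because every $\alpha$-cut of a fuzzy number with $\alpha\in(0,1]$ is a closed interval determined by its endpoints. (The padded zeros carry no information and coincide automatically.)

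Next, extend the equality from the partition levels to all levels. Given $\alpha\in(0,1]$, there is a unique $i$ with $\alpha\in(\alpha_{i-1},\alpha_i]$. By the characterization of finite fuzzy numbers recalled from \cite{ALFONSO201747}, $A_\alpha=A_{\alpha_i}$ and $B_\alpha=B_{\alpha_i}$, so $A_\alpha=B_\alpha$. Thus the two fuzzy sets have identical $\alpha$-cuts for every $\alpha\in(0,1]$.

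The main (though mild) obstacle is deducing $A=B$ as functions from this, including at points where the membership value is $0$. I would use the representation $A(x)=\sup\{\alpha\in(0,1]: x\in A_\alpha\}$, with the convention $\sup\emptyset=0$. This is valid for any fuzzy set, since $x\in A_\alpha$ if and only if $A(x)\ge\alpha$. Applying the same formula to $B$ with the same cuts yields $A(x)=B(x)$ for all $x\in\mathbb{R}$. The level $\alpha_0=0$ needs no separate treatment: $A_0$ is the closure of the union of the positive-level cuts, so it is also determined. This completes injectivity, and the total order then follows from Corollary \ref{coro:Seq_Order}.
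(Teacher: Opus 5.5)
Your proof is correct and follows the same route as the paper. Equality of the $2m$ endpoint parameters forces $A_{\alpha_i}=B_{\alpha_i}$ for every $i$, hence $A=B$, and Corollary \ref{coro:Seq_Order} with $K=\mathbb{R}$ then gives the total order. The only difference is that you prove the step the paper merely asserts, namely that these parameters determine an element of $\operatorname{FFN}_{\Lambda}(\mathbb{R})$. You do this via the constancy of the cuts on each $(\alpha_{i-1},\alpha_i]$ and the representation $A(x)=\sup\{\alpha\in(0,1]: x\in A_\alpha\}$, which is a worthwhile addition.
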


\begin{proof}
Suppose $A, B \in \operatorname{FFN}_{\Lambda}(\mathbb{R})$ such that $\Phi_{param}(A) \equiv \Phi_{param}(B)$. This immediately implies that $\underline{A}_{\,\alpha_i} = \underline{B}_{\,\alpha_i}$ and $\overline{A}_{\alpha_i} = \overline{B}_{\alpha_i}$ for all $i \in \{1, \dots, m\}$. Since these $2m$ parameters completely define the finite fuzzy numbers on the partition $\Lambda$, it follows that $A = B$. Thus, $\Phi$ is injective. By Corollary \ref{coro:Seq_Order}, the induced total preorder is antisymmetric, making $\le_{\Phi}$ a strict total  order.
\end{proof}

While the raw parametric mapping $\Phi_{param}$ perfectly distinguishes finite fuzzy numbers, it may not reflect the heuristic preferences of a decision-maker (e.g., it arbitrarily compares lower bounds before upper bounds). To build a behaviorally meaningful ranking method, we can utilize aggregation functions to construct descriptive indices for each finite and chain them sequentially.

\subsection{Sequential orderings for Interval Type-2 Fuzzy Numbers}

The problem of ranking fuzzy numbers becomes significantly more complex when transitioning from standard type-1 fuzzy numbers to IT2FNs. Because IT2FNs incorporate a secondary layer of uncertainty (the so-called footprint of uncertainty), decision-makers must evaluate two distinct membership functions simultaneously. The generalized sequential framework developed in Section \ref{sec:sequential} provides a rigorous solution to this problem by natively absorbing multidimensional comparisons through interleaving sequences.

 A natural partial order on $\operatorname{IFN}_2(\mathbb{R})$
 extends the Klir--Yuan order by requiring dominance in both uncertainty bounds. For $A, B \in \operatorname{IFN}_2(\mathbb{R})$, we define:
\[ A \le_{KY2} B \iff \underline{A} \le_{KY} \underline{B} \quad \text{and} \quad \overline{A} \le_{KY} \overline{B}. \]

To construct a total order that refines this partial order, we can map IT2FNs into the sequence space of intervals by evaluating the $\alpha$-cuts of both the upper and lower memberships across a dense sequence of levels.

\begin{theorem}
\label{th_IT2_admissible}
Let $\le_{\operatorname{int}}$ be an admissible total order on $\operatorname{int}(\mathbb{R})$, and let $ \{\alpha_n\}_{n \in \mathbb{N}_0}$ be an upper dense sequence in $(0,1]$. Define the mapping $\Phi: \operatorname{IFN}_2(\mathbb{R}) \to \operatorname{seq}(\operatorname{int}(\mathbb{R}))$ by interleaving the $\alpha$-cuts of the upper and lower membership functions:
\[ \Phi(A) = \{\overline{A}_{\alpha_0}, \underline{A}_{\, \alpha_0}, \overline{A}_{\alpha_1}, \underline{A}_{\, \alpha_1}, \dots\} \quad \text{for all } A \in \operatorname{IFN}_2(\mathbb{R}). \]
Specifically, the sequence components are given by $\Phi(A)(2n) = \overline{A}_{\alpha_n}$ and $\Phi(A)(2n+1) = \underline{A}_{\,\alpha_n}$. 
Then, the $\Phi$-sequential binary relation $\le_{\Phi}$ is an admissible total order on $\operatorname{IFN}_2(\mathbb{R})$ with respect to $\le_{KY2}$.
\end{theorem}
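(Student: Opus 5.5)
The plan mirrors the proof of Theorem \ref{th:seq_admis}, with interval cuts of the two membership functions interleaved. There are three steps: (i) $\le_{\Phi}$ is a total preorder; (ii) $\Phi$ is injective, so $\le_{\Phi}$ is a total order; (iii) $\le_{\Phi}$ refines $\le_{KY2}$.

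Step (i) is immediate from Theorem \ref{th:Seq_Preorder}, since $(\operatorname{int}(\mathbb{R}),\le_{\operatorname{int}})$ is totally ordered. Then Corollary \ref{coro:Seq_Order} reduces the total-order claim to injectivity of $\Phi$.

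For step (ii), suppose $\Phi(A)=\Phi(B)$. Then $\overline{A}_{\alpha_n}=\overline{B}_{\alpha_n}$ and $\underline{A}_{\,\alpha_n}=\underline{B}_{\,\alpha_n}$ for every $n\in\mathbb{N}_0$. Both $\overline{A},\overline{B}$ and $\underline{A},\underline{B}$ are fuzzy numbers, and $\{\alpha_n\}$ is upper dense. I will invoke the fact recalled before Definition \ref{H26 15 def binary relation trianglelefteq}: two fuzzy numbers whose cuts agree on an upper dense sequence are equal. This gives $\overline{A}=\overline{B}$ and $\underline{A}=\underline{B}$, hence $A=B$.

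The fact itself can be justified if needed. For $\beta\in(0,1]$, choose $\alpha_{n_k}\downarrow\beta$ with $\alpha_{n_k}\ge\beta$, which upper density allows. The cut $A_\beta$ equals the closure of $\bigcup_k A_{\alpha_{n_k}}$ only when $\beta$ is approached from below, so that is the wrong identity here. The correct one is the left-continuity of cuts, $A_\beta=\bigcap_{\gamma<\beta}A_\gamma$, which uses levels below $\beta$ and so does not match a sequence approaching from above. The way through is membership values: $A(x)\ge\beta$ iff $A(x)\ge\alpha_{n_k}$ fails for no... Rather than settle this here, I will simply cite the recalled fact (as the paper does in Theorem \ref{th:seq_admis}). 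I expect this step to be the only delicate point.

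For step (iii), suppose $A\le_{KY2}B$ and $A\neq B$. Then $\overline{A}\le_{KY}\overline{B}$ and $\underline{A}\le_{KY}\underline{B}$. So for every $\alpha$ we have $\overline{A}_\alpha\le_{KM}\overline{B}_\alpha$ and $\underline{A}_{\,\alpha}\le_{KM}\underline{B}_{\,\alpha}$, and by admissibility of $\le_{\operatorname{int}}$ each of these holds in $\le_{\operatorname{int}}$ as well. Hence every component satisfies $\Phi(A)(k)\le_{\operatorname{int}}\Phi(B)(k)$. By injectivity, $\Phi(A)\neq\Phi(B)$. Let $k_0$ be the least index at which they differ. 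Then $\Phi(A)(k)=\Phi(B)(k)$ for $k<k_0$, and at $k_0$ the inequality is strict. Definition \ref{def:order_for_sequences} then gives $\Phi(A)\sqsubseteq\Phi(B)$, that is, $A\le_{\Phi}B$. The case $A=B$ follows from reflexivity. This argument is exactly the one in Theorem \ref{th:seq_admis}, with the only bookkeeping being the even/odd indexing $\Phi(A)(2n)$, $\Phi(A)(2n+1)$.
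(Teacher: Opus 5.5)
Your proposal is correct and follows the paper's proof essentially step for step. You obtain injectivity of $\Phi$ from upper density, apply Corollary \ref{coro:Seq_Order}, and get admissibility by pushing $\le_{KM}$ to $\le_{\operatorname{int}}$ componentwise and taking the first index where the sequences differ.

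Your half-finished aside on the upper-density fact should be deleted or repaired, because its premise is wrong. Applying upper density at $\beta-\varepsilon$ (with $0<\varepsilon<\beta$) gives terms in $[\beta-\varepsilon,\beta)$, so the sequence does approach every $\beta\in(0,1]$ from below. Hence $A_\beta=\bigcap_{\alpha_n\le\beta}A_{\alpha_n}$, which settles the fact in one line.
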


\begin{proof}
First, we prove that $\le_{\Phi}$ is a total order by showing that $\Phi$ is injective. Let $A, B \in \operatorname{IFN}_2(\mathbb{R})$ such that $\Phi(A) = \Phi(B)$. This implies that for all $n \in \mathbb{N}_0$, $\overline{A}_{\alpha_n} = \overline{B}_{\alpha_n}$ and $\underline{A}_{\alpha_n} = \underline{B}_{\alpha_n}$. Since the sequence $ \{\alpha_n\}_{n \in \mathbb{N}_0}$ is upper dense in $(0,1]$, $\overline{A} = \overline{B}$ and $\underline{A} = \underline{B}$, which means $A = B$. Since $\Phi$ is injective into a totally ordered sequence space (Corollary \ref{coro:Seq_Order}), $\le_{\Phi}$ is a total order.

Next, we prove admissibility. Assume $A \le_{KY2} B$ and $A \neq B$. By definition, this means $\overline{A}_\alpha \le_{KM} \overline{B}_\alpha$ and $\underline{A}_\alpha \le_{KM} \underline{B}_\alpha$ for all $\alpha \in [0,1]$. Since $\le_{\operatorname{int}}$ is an admissible interval order, it refines $\le_{KM}$, yielding $\overline{A}_\alpha \le_{\operatorname{int}} \overline{B}_\alpha$ and $\underline{A}_\alpha \le_{\operatorname{int}} \underline{B}_\alpha$.
Since $A \neq B$, their sequences must be different. Let $k_0 \in \mathbb{N}_0$ be the minimum sequence index where $\Phi(A)(k) \neq \Phi(B)(k)$.
\begin{enumerate}
    \item If $k_0 = 2m$ (an even index), the divergence occurs at the upper membership function level $\alpha_m$. Since $\overline{A}_{\alpha_m} \le_{\operatorname{int}} \overline{B}_{\alpha_m}$ and they are not equal, we have $\overline{A}_{\alpha_m} <_{\operatorname{int}} \overline{B}_{\alpha_m}$.
    \item If $k_0 = 2m+1$ (an odd index), the divergence occurs at the lower membership function level $\alpha_m$. Since $\underline{A}_{\,\alpha_m} \le_{\operatorname{int}} \underline{B}_{\,\alpha_m}$ and they are not equal, we have $\underline{A}_{\,\alpha_m} <_{\operatorname{int}} \underline{B}_{\,\alpha_m}$.
\end{enumerate}
In both cases, at the first point of divergence, the component of $\Phi(A)$ is strictly less than the component of $\Phi(B)$. By the definition of the sequential order $\sqsubseteq$, this means $\Phi(A) \sqsubseteq \Phi(B)$. Therefore, $A \le_{KY2} B \implies A \le_{\Phi} B$, making it an admissible order on $\operatorname{IFN}_2(\mathbb{R})$.
\end{proof}

Note that the mapping $\Phi$ structurally prioritizes the upper membership function (the wider bounds of uncertainty) by placing $\overline{A}_{\alpha_n}$ before $\underline{A}_{\,\alpha_n}$. If a decision-maker wishes to prioritize the most conservative estimates (the lower membership function), the sequence can be trivially inverted to $\Phi^* (A) = \{\underline{A}_{\,\alpha_0}, \overline{A}_{\alpha_0}, \dots\}$, generating an equally valid, alternative total order.

\section{Conclusions}
\label{sec:conclusion}

In this paper, we have addressed the challenge of incomparability and information loss that limit classical ordering methods in fuzzy decision-making. To overcome these limitations, we have introduced a generalized mathematical framework based on totally preordered sequence spaces. By embedding complex fuzzy objects into this sequence space via an evaluation mapping, we established a lexicographical mechanism to resolve ties. 

Building upon this core theoretical foundation, we demonstrated the capacity of the sequential framework to act as a unifying algebraic umbrella for the broader literature. We provided mathematical proofs showing that highly technical relations (such as the $\alpha$-order and admissible orders based on upper dense sequences) are perfectly characterized as specific instances of the $\Phi$-sequential relation. Beyond unification, we explored the generative and practical potential of the framework by extending it to diverse computational environments. We have successfully provided the theoretical properties to construct total orders (even admissible) for fuzzy numbers based on real-valued ranking indexes; we developed strictly injective parametric mappings to construct computationally efficient total orders for finite fuzzy numbers; and we adapted the sequential approach to handle the multidimensional uncertainty of IT2FNs. 

Future research will focus on integrating these algebraic sequential algorithms into applied computational systems, such as fuzzy logic controllers and machine learning models, and extending the sequence space architecture to accommodate other advanced environments like intuitionistic or hesitant fuzzy sets.
\section*{Acknowledgements}

The authors are grateful to their universities.

\bibliographystyle{plain}

\end{document}